

\documentclass[preprint,12pt]{elsarticle}




\usepackage{amssymb}
\usepackage{amsthm}

\usepackage{multirow,multicol}
\usepackage{booktabs}
\usepackage{amsmath, amsthm, amscd, amsfonts, amssymb, graphicx, color}
\usepackage[bookmarksnumbered, plainpages, backref]{hyperref}
\usepackage [latin1]{inputenc}

\usepackage{graphics}
\usepackage{epsfig}
\usepackage{graphicx}
\usepackage{epstopdf}
\usepackage{amsthm}

\theoremstyle{plain}
\newtheorem{theorem}{Theorem}[section]

\newtheorem{remark}[theorem]{Remark}
\newtheorem{definition}[theorem]{Definition}

\usepackage{float}

\journal{Elsevier}

\begin{document}

\begin{frontmatter}

\title{Numerical integration of stochastic contact Hamiltonian systems via stochastic Herglotz variational principle}

\author[mymainaddress]{Qingyi Zhan \corref{mycorrespondingauthor}}
\ead{zhan2017@fafu.edu.cn}
\address[mymainaddress]{College  of Computer and Information Science,\\
                         Fujian Agriculture and Forestry  University,Fuzhou,350002,China}

\author[mysecondaryaddress]{Jinqiao Duan }
\ead{duan@iit.edu}
\address[mysecondaryaddress]{Department of Applied Mathematics,\\
                         Illinois Institute of Technology,Chicago,IL,60616,USA}

\author[mysecondaryaddress]{Xiaofan Li}
\ead{lix@iit.edu}
\cortext[mycorrespondingauthor]{Corresponding author}

\author[mythirdaddress]{Yuhong Li}
\ead{liyuhong@hust.edu.cn}
\address[mythirdaddress]{ College of Hydropower and Information Engineering,\\
   Huazhong University of Science and Technology,
    Wuhan,430074,China}

\begin{abstract}
   In this work we construct a stochastic contact variational integrator and its discrete version via stochastic Herglotz variational principle for stochastic contact Hamiltonian systems. A general structure-preserving stochastic contact method is devised, and the stochastic contact variational integrators are established.
   The implementation of this approach is validated by the numerical experiments.

\end{abstract}

\begin{keyword}
stochastic contact Hamiltonian  systems, structure-preserving method, contact scheme,stochastic Herglotz variational principle
\MSC[2010] 37C50,65C30, 65P20
\end{keyword}

\end{frontmatter}

\section{Introduction}

 Contact geometry was introduced in Sophus Lie's study of differential equations, and has been the subject of intense research, especially related to low-dimensional topology \cite{Arnold, Bravetti, Feng, Geiges}. In the last few years, stochastic contact Hamiltonian systems have been interesting subject \cite{Wei}. These stochastic differential equations (SDEs)  are of the importance in modelling natural phenomena, such as, gravity, thermodynamics and dissipative systems \cite{Duan}. Stochastic contact Hamiltonian systems constitute a rather important class of SDEs, which contain contact structures similar to symplectic structures in the odd dimension. Therefore, there is a demand for the investigation of numerical integrators to preserve the contact structures.

As the counterpart of contact case, stochastic symplectic methods have attracted much attention, such as those in
\cite{Hairer,  Milstein02, Milstein03,Misawa, Hong, Zhan1}, and so on. A theoretical framework, structure-preserving algorithm, has been widely applied in many aspects. The deterministic contact Hamiltonian systems have been studied recently. For example, \cite{Feng} exploits the symplectification  and the corresponding generating functions. However, so far \cite{Feng} has not attracted much attention most likely due to its missing of a variational approach, which would make it hard to construct a numerical contact method. \cite{M. Kraus} investigates variational integrators for stochastic dissipative Hamiltonian systems, which is the Hamiltonian systems in even dimension. That is, they are one class of dissipative Hamiltonian systems.

Fortunately, contact flows possess geometric integrators that precisely parallel their symplectic counterparts.
Contact variational integrators  have been an important approach of creating contact methods \cite{Georgieva}. They have relationship with stochastic Herglotz variational principle and its discrete version.

For stochastic contact Hamiltonian systems, the main difficulty in constructing stochastic contact variational integrators is the formulation of the stochastic Herglotz variational principle \cite{Bravetti02,Leon}.

In this paper we investigate the numerical contact integrators for stochastic contact Hamiltonian systems, by utilizing  the generalized Herglotz variational principle. It is motivated by two factors. First, as we know, an associated variational principle for contact Hamiltonian systems and its contact methods  are of great interest \cite{Georgieva, Liu}. Then it is natural to expect to expand it to the stochastic contact Hamiltonian systems. Second, many contributions are made to the numerical analysis of SDEs \cite{Milstein01, X. Wang, Zhan1}. The readers can find more information on numerical topic in these references. These are the foundations of our present work. However, to the best of our knowledge, systematic construction of contact scheme of stochastic contact Hamiltonian systems  is still an open problem.

Our results furnished a contact scheme for stochastic contact Hamiltonian systems in which the contact structure is preserved almost surely.
 In the numerical experiments, we compare the numerical dynamical behaviors of contact scheme with non-contact scheme in several aspects, such as the orbits in a long time interval, the preservations of contact structure and the conformal factor. For our purpose the numerical experiments are realizable by programming.

The structure of this paper is as follows. Section 2 deals with some preliminaries. In Section 3 the  theoretical results on stochastic Herglotz variational principle are summarized. The discrete version of stochastic Herglotz variational principle is proved. Illustrative numerical experiments are included in Section 4. Finally, the last section is addressed to summarize the conclusions of the paper.

\section{Preliminaries}

We consider the following stochastic Hamiltonian systems in the sense of Stratonovich on a smooth $d=(2n+1)$-dimensional contact manifold $\mathbb{M}$,
\begin{equation}\label{2.1}
 dX(t)=f(t,X(t))dt+\sum_{k=1}^m g_k(t,X(t)) \circ  dW^k(t),\ \ \  \ \ \ X(t_0)=x\in \mathbb{M} ,
\end{equation}
where $X, f(t, x^1,..., x^d), g_k(t, x^1,..., x^d)$ are $d$-dimensional column-vectors with the
components $X^i, f^i, g_k^i, i = 1, . . . , d$, and $W^k(t), k= 1,...,m$, are independent
standard Wiener processes  on a filtered probability space $(\Omega, \mathcal{F}, \{\mathcal{F}_t\}_{t\geq 0}, \mathbb{P})$.

In Darboux coordinates $(q,p,s)=(q^1,q^2,...,q^n,p^1,p^2,...,p^n,s)$, a canonical stochastic contact Hamiltonian system can be rewritten as
 \begin{equation}\label{2.2}
\left \{
\begin{aligned}
 dq&=\frac{\partial H_0}{\partial p}dt+\sum_{k=1}^m \frac{\partial H_k}{\partial p}\circ dW^k(t),\\
 dp&=-\Big(\frac{\partial H_0}{\partial q}+p\frac{\partial H_0}{\partial s}\Big ) dt-\sum_{k=1}^m \Big(\frac{\partial H_k}{\partial q}+p\frac{\partial H_k}{\partial s}\Big)\circ dW^k(t),\\
 ds&=\Big(p\frac{\partial H_0}{\partial p}-H_0 \Big ) dt+\sum_{k=1}^m \Big(p\frac{\partial H_k}{\partial p}-H_k\Big)\circ dW^k(t).
 \end{aligned}
\right.
\end{equation}
with initial condition $(q(t_0),p(t_0),s(t_0))=(q_0, p_0, s_0), t_0 \geq 0$, where $H_0:=H_0(q,p,s)$ is a smooth Hamiltonian function on $\mathbb{M}$, and $\{H_k\}_{k=1}^m:=\{H_k(q,p,s)\}_{k=1}^m$ are a family of smooth functions on $\mathbb{M}$. In fact, Equs. $(\ref{2.2})$ are the generalization of Hamilton's equation to a contact manifold. In particular, if $\{H_k\}_{k=0}^m$ do not depend on $s$, Equs. $(\ref{2.2})$ give a stochastic Hamiltonian equations in the symplectic phase space. Therefore, Equs. $(\ref{2.2})$ generalize the equations of motion for the positions, the momenta and Hamilton's principal function of the standard Hamilton's theory, and can include a large class of models, such as dissipative systems.

We introduce the following notations.

Let $\mathbb{L}^2(\Omega,\mathbb{P})$ be the space of all bounded square-integrable random variables $X:\Omega \rightarrow \mathbb{R}^d$. For random vector $X=({x^1},{ x^2},...,{ x^d}) \in \mathbb{R}^d$, the norm of $X$ is defined in the form of
\begin{equation}\label{2.3}
\|X\|_2=\Big[\int_\Omega[|x^1(\omega)|^2+|x^2(\omega)|^2+...+|x^d(\omega)|^2]d\mathbb{P}\Big]^{\frac{1}{2}}< \infty.
\end{equation}
We define the norm of random matrices as follows\cite{Golub}
\begin{equation}\label{2.4}
 \| G \|_{\mathbb{L}^2(\Omega,\mathbb{P})} =\Big[\mathbb{E}(|G|^2)\Big]^{\frac{1}{2}},
\end{equation}
where $G$ is a random matrix and $|\cdot|$ is the operator norm.

For simplicity, the norms $\|\cdot\|_2$ and $ \| \cdot \|_{\mathbb{L}^2(\Omega,\mathbb{P})}$ are usually written as $\|\cdot\|$.

\section{Theoretical results on stochastic Herglotz variational principle}

\subsection{Stochastic Herglotz variational principle }
Similar to symplectic Hamiltonian systems, variational principle is a powerful tool to study the dynamics of contact Hamiltonian systems. This was originally created by Herglotz in 1930, and there are many related works, such as \cite{Georgieva}.
It follows from \cite{Tveter} that the Lagrange equation can be written in the expanded variable set $(q,p,\dot{q},\dot{p},t)$ \cite{Hong}. We introduce the following definition.

\begin{definition}(Deterministic Herglotz variational principle \cite{Bravetti02,Georgieva,Vermeeren})

Let $\mathbb{Q}$ be an n-dimensional manifold with local coordinates $q^i,p^i$, and $T\mathbb{Q}$ be the tangent space of $\mathbb{Q}$. Consider a continuous Lagrangian $L:\mathbb{R}\times T\mathbb{Q}\times \mathbb{R}  \times T\mathbb{Q}\times \mathbb{R}\rightarrow  \mathbb{R}$. For any given curves $q,p:[0,T]\rightarrow \mathbb{Q} $, which connect $q(0)=q_0,p(0)=p_0$ and $p(T)=p_N, q(T)=q_N$, with $\delta q(0)=\delta q(T)=\delta p(0)=\delta p(T)=0$, satisfying the following deterministic contact Hamiltonian system,
  \begin{equation}
\left \{
\begin{aligned}
 dq&=\frac{\partial H_0}{\partial p}dt,\\
 dp&=-\Big(\frac{\partial H_0}{\partial q}+p\frac{\partial H_0}{\partial s}\Big ) dt,\\
 ds&=\Big(p\frac{\partial H_0}{\partial p}-H_0 \Big ) dt,
 \end{aligned}
\right.
\nonumber
\end{equation}
 the initial value problem is given as follows
\begin{equation}
\dot{s}=L(t,q(t),\dot{q}(t),p(t),\dot{p}(t),s).
\nonumber
\end{equation}
Therefore, the value $s(T)$ is called the action functional of the curves $q(t)$ and $p(t)$. The curves $q(t)$ and $p(t)$ are called critical if and only if $s(T)$ is invariant under infinitesimal variations of $q$ and $p$ that vanish at the boundary of $[0,T]$, where the notation s is the same as in the contact Hamiltonian system, that is, the scalar quantity s in the contact Hamiltonian system can be regarded as the action functional.
\end{definition}

It is natural to generalize it to the stochastic case \cite{Bravetti, Bravetti02, Hong}.

\begin{definition} (Stochastic Herglotz variational principle )

For any given curves $q,p:[0,T]\rightarrow \mathbb{Q} $, with $q(0)=q_0,p(0)=p_0$ and $p(T)=p_N, q(T)=q_N$, satisfying the stochastic contact Hamiltonian system $(\ref{2.2})$, the initial value problem is given as follows
\begin{equation}\label{3.1}
\dot{s}=L(t,q(t),\dot{q}(t),p(t),\dot{p}(t),s)-\sum_{k=1}^mH_k \circ \dot{W}^k,s(0)=s_0,
\end{equation}
where $\dot{W}^kdt=dW^k$. Therefore, the curves $q(t)$ and $p(t)$ are called critical if and only if $s(T)$ is invariant under infinitesimal variations of $q$ and $p$ that vanish at the boundary of $[0,T]$.
\end{definition}

\begin{theorem}
 If the curves $q(t)$ and $p(t)$ are solutions to the following stochastic Euler-Lagrange equations,
 \begin{equation}\label{3.2}
 \left \{
\begin{aligned}
 \frac{\partial L}{\partial q}-\sum_{k=1}^m \frac{\partial H_k}{\partial q}\circ \dot{W}^k-\frac{d}{dt}\frac{\partial L}{\partial \dot{q}}+\Big(\frac{\partial L}{\partial s}-\sum_{k=1}^m\frac{\partial H_k}{\partial s}\circ \dot{W}^k\Big)\frac{\partial L}{\partial \dot{q}}=0,
 \\
 \frac{\partial L}{\partial p}-\sum_{k=1}^m \frac{\partial H_k}{\partial p}\circ\dot{W}^k-\frac{d}{dt}\frac{\partial L}{\partial \dot{p}}+\Big(\frac{\partial L}{\partial s}-\sum_{k=1}^m\frac{\partial H_k}{\partial s}\circ\dot{W}^k\Big)\frac{\partial L}{\partial \dot{p}}=0,
 \end{aligned}
\right.
 \end{equation}
then the action functional $s:[0,T]\longrightarrow \mathbb{R}$ with an initial condition $s(0)=s_0$ can be minimized by this curves $q(t)$ and $p(t)$.
\end{theorem}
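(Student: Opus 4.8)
The plan is to carry out a first-variation computation of the Herglotz functional $s(T)$, adapted to the Stratonovich setting so that the ordinary Leibniz and chain rules remain available pathwise. First I would introduce one-parameter families $q^\epsilon = q + \epsilon\,\delta q$ and $p^\epsilon = p + \epsilon\,\delta p$ with $\delta q(0)=\delta q(T)=\delta p(0)=\delta p(T)=0$, and let $s^\epsilon$ solve the defining problem $(\ref{3.1})$ driven by $(q^\epsilon,p^\epsilon)$ with $s^\epsilon(0)=s_0$ held fixed. Differentiating $\dot s = L-\sum_k H_k\circ\dot W^k$ in $\epsilon$ at $\epsilon=0$ and writing $\delta s=\partial_\epsilon s^\epsilon|_{\epsilon=0}$, the chain rule yields a first-order linear equation for $\delta s$,
\begin{equation}
\frac{d}{dt}\delta s - A(t)\,\delta s = \Big(\frac{\partial L}{\partial q}-\sum_{k=1}^m\frac{\partial H_k}{\partial q}\circ\dot W^k\Big)\delta q + \frac{\partial L}{\partial\dot q}\,\delta\dot q + \Big(\frac{\partial L}{\partial p}-\sum_{k=1}^m\frac{\partial H_k}{\partial p}\circ\dot W^k\Big)\delta p + \frac{\partial L}{\partial\dot p}\,\delta\dot p,
\nonumber
\end{equation}
where $A(t)=\frac{\partial L}{\partial s}-\sum_{k=1}^m\frac{\partial H_k}{\partial s}\circ\dot W^k$ is precisely the coefficient multiplying $\partial L/\partial\dot q$ and $\partial L/\partial\dot p$ in $(\ref{3.2})$.

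Next I would solve this equation with the integrating factor $\mu(t)=\exp\big(-\int_0^t A(\tau)\,d\tau\big)$, which satisfies $\dot\mu=-A\mu$ and is strictly positive almost surely. Multiplying through by $\mu$ and integrating over $[0,T]$, using $\delta s(0)=0$ since $s_0$ is fixed, expresses $\mu(T)\,\delta s(T)$ as the integral of $\mu$ against the right-hand side above. I would then integrate by parts in $t$ the two terms containing $\delta\dot q$ and $\delta\dot p$; the boundary contributions vanish because $\delta q$ and $\delta p$ vanish at $0$ and $T$. Differentiating $\mu\,\partial_{\dot q}L$ and $\mu\,\partial_{\dot p}L$ and using $\dot\mu=-A\mu$ replaces $\frac{d}{dt}\frac{\partial L}{\partial\dot q}$ by $\frac{d}{dt}\frac{\partial L}{\partial\dot q}-A\frac{\partial L}{\partial\dot q}$, and similarly for $p$, so the integrand collapses to $\mu$ times the left-hand sides of the two stochastic Euler--Lagrange equations $(\ref{3.2})$ paired with $\delta q$ and $\delta p$ respectively.

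Invoking the hypothesis that $(\ref{3.2})$ holds along $q(t),p(t)$, the integrand vanishes identically, so $\mu(T)\,\delta s(T)=0$; since $\mu(T)>0$ almost surely, $\delta s(T)=0$ for every admissible variation, which is exactly the criticality of $s(T)$ asserted in the theorem.

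I expect the main obstacle to lie in the stochastic bookkeeping rather than in the algebra. The argument treats $\circ\,\dot W^k$ with the classical product and chain rules, which is legitimate only because the system is interpreted in the Stratonovich sense; I would therefore need to justify that the variation $\delta$ commutes with both $d/dt$ and the Stratonovich integrals, and that the integrating factor $\mu(t)$, whose exponent contains $\int_0^t \frac{\partial H_k}{\partial s}\circ dW^k$, is almost surely finite and strictly positive so that dividing by $\mu(T)$ is valid. Once these pathwise regularity points are secured, the remaining computation is the deterministic Herglotz argument executed $\omega$ by $\omega$.
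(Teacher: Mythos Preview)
Your proposal is correct and follows essentially the same route as the paper: both derive the linear first-order equation for $\delta s$ with coefficient $\frac{\partial L}{\partial s}-\sum_k\frac{\partial H_k}{\partial s}\circ\dot W^k$, solve it via the exponential integrating factor (the paper writes the solution directly as $\delta s(t)=\exp(C(t))\big[\int_0^t(\cdots)\exp(-C)\,d\tau+\delta s(0)\big]$, which is your $\mu^{-1}$), integrate by parts the $\delta\dot q$ and $\delta\dot p$ terms, and use the vanishing endpoint variations to identify the Euler--Lagrange expressions. The paper does not discuss the Stratonovich regularity issues you flag, so your treatment is if anything more careful on that point.
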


\begin{proof}
The variation of $\dot{s}$ is obtained from $(\ref{3.1})$
$$\delta \dot{s}=\frac{\partial L}{\partial q}\delta q+\frac{\partial L}{\partial \dot{q}}\delta
\dot{q}+\frac{\partial L}{\partial p}\delta p+\frac{\partial L}{\partial \dot{p}}\delta
\dot{p}+\frac{\partial L}{\partial s}\delta s-  \sum_{k=1}^m\Big (\frac{\partial H_k}{\partial q}\delta q+\frac{\partial H_k}{\partial p}\delta p+\frac{\partial H_k}{\partial s}\delta s\Big)\circ\dot{W}^k.$$

Here we set
$$A(t):=\frac{\partial L}{\partial q}\delta q+\frac{\partial L}{\partial \dot{q}}\delta
\dot{q}- \sum_{k=1}^m \frac{\partial H_k}{\partial q}\delta q\circ\dot{W}^k,$$
$$B(t):=\frac{\partial L}{\partial p}\delta p+\frac{\partial L}{\partial \dot{p}}\delta
\dot{p}- \sum_{k=1}^m \frac{\partial H_k}{\partial p}\delta p\circ\dot{W}^k,$$
and
$$C(t):=\int_0^t\Big(\frac{\partial L}{\partial s}(\tau)-\sum_{k=1}^m\frac{\partial H_k}{\partial s}(\tau)\circ\dot{W}^k\Big)d\tau.$$
Then we obtain the differential equation as follows

$$\delta \dot{s}(t)=A(t)+B(t)+\frac{dC(t)}{dt}\delta s,$$
whose solution is
$$\delta s(t)=\exp(C(t))\Big[\int_0^t(A(\tau)+B(\tau))\exp(-C(\tau))d\tau+\delta s(0)\Big].$$

Utilize the  integration by parts and the expression of $A(t)$, $B(t)$ and $C(t)$, we obtain that
$$\delta s(T)=\exp(C(t))\cdot\Bigg[\int_0^T\Big(\frac{\partial L}{\partial q}-\sum_{k=1}^m \frac{\partial H_k}{\partial q}\circ\dot{W}^k-\frac{d}{dt}\frac{\partial L}{\partial \dot{q}}+\frac{dC}{dt}\frac{\partial L}{\partial \dot{q}} \Big)\cdot \exp(-C(\tau))\delta qd\tau $$

$$+\frac{\partial L}{\partial \dot{q}}(T)\exp(-C(T))\delta q(T)-\frac{\partial L}{\partial \dot{q}}(0)\delta q(0)$$

$$+\int_0^T\Big(\frac{\partial L}{\partial p}-\sum_{k=1}^m \frac{\partial H_k}{\partial p}\circ\dot{W}^k-\frac{d}{dt}\frac{\partial L}{\partial \dot{p}}+\frac{dC}{dt}\frac{\partial L}{\partial \dot{p}} \Big)\cdot \exp(-C(\tau))\delta pd\tau $$

$$+\frac{\partial L}{\partial \dot{p}}(T)\exp(-C(T))\delta p(T)-\frac{\partial L}{\partial \dot{p}}(0)\delta p(0)+\delta s(0)\Bigg],$$

where $$\frac{dC}{dt}=\frac{\partial L}{\partial s}-\sum_{k=1}^m\frac{\partial H_k}{\partial s}\circ\dot{W}^k.$$

Due to the boundary conditions $\delta q(T)=\delta q(0)=\delta p(T)=\delta p(0)=0$, we obtain that the action $s(T)$ is critical if and only if equation $(\ref{3.2})$ holds.

The proof of Theorem 3.3 is completed.

\end{proof}

\begin{remark} \cite{Cieslinski, Musielak, Vermeeren,Hong}

Here we present the relationship between the stochastic Herglotz variational principle and the Hamiltonian formulation of stochastic contact Hamiltonian systems. Let $L$ be the Lagrangian function with respect to the deterministic part of the stochastic contact systems $(\ref{2.2})$, and it is connected with the deterministic Hamiltonian function $H_0$ through the Legendre transformation
$$L=p^T\dot{q}-H_0.$$
\end{remark}

\begin{theorem}
 The stochastic Lagrangian and Hamiltonian formalisms are equivalent. That is, we can apply the expression of $L$ and stochastic Euler-Lagrangian equations $(\ref{3.2})$ to obtain Equs. $(\ref{2.2})$.
\end{theorem}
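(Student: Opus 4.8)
The plan is to establish the equivalence by direct substitution: I would use the Legendre relation $L = p^{T}\dot{q} - H_0$ introduced in the preceding remark to eliminate $L$ in favour of the Hamiltonian data, and then read off each of the three equations of $(\ref{2.2})$ from the Euler--Lagrange system $(\ref{3.2})$ together with the defining relation $(\ref{3.1})$ for $\dot{s}$.

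First I would record the partial derivatives of $L$ in the expanded variable set $(t,q,\dot{q},p,\dot{p},s)$. Since $L = p^{T}\dot{q} - H_0(q,p,s)$, these are $\partial L/\partial q = -\partial H_0/\partial q$, $\partial L/\partial\dot{q} = p$, $\partial L/\partial p = \dot{q} - \partial H_0/\partial p$, $\partial L/\partial\dot{p} = 0$, and $\partial L/\partial s = -\partial H_0/\partial s$. The structural feature I would exploit is that $L$ is independent of $\dot{p}$, so that $\partial L/\partial\dot{p} = 0$; this collapses the second Euler--Lagrange equation to a pointwise relation.

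Next I would substitute these derivatives into the second equation of $(\ref{3.2})$ (the one differentiating in $p$). Both terms carrying the factor $\partial L/\partial\dot{p}$ vanish, leaving $(\dot{q} - \partial H_0/\partial p) - \sum_{k} (\partial H_k/\partial p)\circ\dot{W}^{k} = 0$; multiplying through by $dt$ and using $\dot{W}^{k}dt = dW^{k}$ recovers exactly the $dq$-equation of $(\ref{2.2})$. I would then substitute into the first equation of $(\ref{3.2})$ (differentiating in $q$); using $\partial L/\partial\dot{q} = p$ so that $\tfrac{d}{dt}(\partial L/\partial\dot{q}) = \dot{p}$, collecting the drift term and each noise channel, and multiplying by $dt$, yields the $dp$-equation of $(\ref{2.2})$. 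Finally, for the $ds$-equation I would return to $(\ref{3.1})$, insert $L = p^{T}\dot{q} - H_0$ and the expression for $\dot{q}$ just obtained so that $p^{T}\dot{q} = p\,\partial H_0/\partial p + \sum_{k} p\,(\partial H_k/\partial p)\circ\dot{W}^{k}$; combining this with the term $-\sum_{k} H_k\circ\dot{W}^{k}$ and multiplying by $dt$ produces the $ds$-equation of $(\ref{2.2})$.

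The calculation is routine once the Legendre substitution is in place; the only point needing care is that every stochastic term is read in the Stratonovich sense (marked by $\circ$), so that the ordinary product and chain rules used when differentiating $L$ and when forming $\tfrac{d}{dt}(\partial L/\partial\dot{q})$ remain valid with no It\^o correction. Keeping the noise channels $k = 1,\dots,m$ separated throughout, so that each $\circ\,dW^{k}$ coefficient is matched on both sides, is the main piece of bookkeeping, and I expect no genuine obstacle beyond it.
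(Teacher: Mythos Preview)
Your proposal is correct and matches the paper's proof essentially step for step: compute the partials of $L=p^{T}\dot q-H_0$, feed them into the two Euler--Lagrange equations of $(\ref{3.2})$ to recover the $dq$- and $dp$-equations of $(\ref{2.2})$, and then use $(\ref{3.1})$ together with the expression for $\dot q$ to obtain the $ds$-equation. The only cosmetic difference is the order in which you treat the two Euler--Lagrange equations.
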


\begin{proof}
According to the explicit expression of $L$, we have
$$\frac{\partial L}{\partial q}=-\frac{\partial H_0}{\partial q},\frac{\partial L}{\partial \dot{q}}=p, \frac{\partial L}{\partial s}=-\frac{\partial H_0}{\partial s}.$$
Then substituting these into the first equation of Equs. $(\ref{3.2})$, we have
$$-\frac{\partial H_0}{\partial q}-\sum_{k=1}^m \frac{\partial H_k}{\partial q}\circ\dot{W}^k-\dot{p}+\Big(-\frac{\partial H_0}{\partial s}-\sum_{k=1}^m\frac{\partial H_k}{\partial s}\circ\dot{W}^k\Big)p=0,$$
This is equivalent to the second equation of Equs. $(\ref{2.2})$.

Similarly, we have
$$\frac{\partial L}{\partial p}=\dot{q}-\frac{\partial H_0}{\partial p},\frac{\partial L}{\partial \dot{p}}=0.$$
Then we utilize these to the second equation of Equs. $(\ref{3.2})$ and obtain
$$ \dot{q}-\frac{\partial H_0}{\partial p}-\sum_{k=1}^m \frac{\partial H_k}{\partial p}\circ\dot{W}^k=0,$$
which is equivalent to the first equation of Equs. $(\ref{2.2})$.

Lastly, by Equ. $(\ref{3.1})$, we have
$$\dot{s}=p\dot{q}-H_0-\sum_{k=1}^mH_k\circ\dot{W}^k.$$
Applying  the result of $\dot{q}$, we get
$$\dot{s}=p\Big(\frac{\partial H_0}{\partial p}+\sum_{k=1}^m \frac{\partial H_k}{\partial p}\circ\dot{W}^k\Big )-H_0-\sum_{k=1}^mH_k\circ\dot{W}^k.$$
It is easy to check that this is equivalent to the third equation of Equs. $(\ref{2.2})$.

The proof of Theorem 3.5 is finished.
\end{proof}

\subsection{Discrete stochastic Herglotz variational principle}
 Based on the stochastic Herglotz variational principle and stochastic Euler-Lagrange equations $(\ref{3.2})$,  we will study the construction of the contact methods for the stochastic contact Hamiltonian systems. We first present the following definition, which is the discrete version of stochastic Herglotz variational principle.

 \begin{definition}
 Let $\mathbb{Q}$ be an n-dimensional manifold with local coordinates $q^i,p^i$, and let $L:\mathbb{Q}^4 \times \mathbb{R}\times \mathbb{R}\rightarrow  \mathbb{R}$ with $h>0$. For any given discrete curves $q:=(q_1,q_2,...,q_N)\in \mathbb{Q}^{N+1}$ and $p:=(p_1,p_2,...,p_N)\in \mathbb{Q}^{N+1}$, we define $s:=(s_1,s_2,...,s_N)\in \mathbb{R}^{N+1}$ with $s_0=0$ and
\begin{equation}\label{3.3}
\begin{aligned}
s_{j+1}-s_j=hL_j-\sum_{k=1}^mH_k^j\circ \Delta W_j^k, almost\  surely,
\end{aligned}
\end{equation}
where $$L_j=:L(q_j,q_{j+1},p_j,p_{j+1},s_j,s_{j+1}), H_k^j=:H_k(q_j,q_{j+1},p_j,p_{j+1},s_j,s_{j+1}),$$
and
$$\Delta W_j^k=W^k(t_{j+1})-W^k(t_j).$$
Then the curve $s_N$ is a functional of the discrete curves $q$ and $p$. The discrete curves $q$ and $p$ are the solutions to the discrete
stochastic Herglotz variational principle if and only if
\begin{equation}\label{3.4}
\frac{\partial s_N}{\partial q_j}=0, \forall j\in \{1,2,...,N-1\}, almost \ surely.
\end{equation}
\end{definition}

 \begin{definition}
 If the numerical solution $\{(q_j,p_j,s_j)\}_{j=0}^N$ of the stochastic contact Hamiltonian system $(\ref{2.2})$ is obtained by a discrete scheme and satisfies the following equality,
\begin{equation}\
ds_{j+1}-p_{j+1}dq_{j+1}=\lambda_j(ds_j-p_j dp_j),j=0,1,2,...,N-1, almost \ surely,
\nonumber
\end{equation}
this discrete scheme is called contact scheme,
where $\lambda_j$ is the conformal factor with $\lambda_0=1$.
\end{definition}

 Now we will show the fact that the discrete scheme obtained by Theorem 3.8 is contact one, that is, the scheme preserves contact structure in the case of discrete time.

\begin{theorem}
Let
\begin{equation}\label{3.5}
 p_j^-=\frac{D^{j-1}}{1-E^{j-1}},p_j^+=-\frac{D^{j}}{1+E^{j}},
\end{equation}
where
 \begin{equation}
 \left \{
\begin{aligned}
 D^{j-1}=&h\frac{\partial L_{j-1}}{\partial q_j}-\sum_{k=1}^m\frac{\partial H_k^{j-1}}{\partial q_j}\circ\Delta W_{j-1}^k,\\
E^{j-1}=&h\frac{\partial L_{j-1}}{\partial s_j}-\sum_{k=1}^m\frac{\partial H_k^{j-1}}{\partial s_j}\circ\Delta W_{j-1}^k,
 \end{aligned}
\right.
\nonumber
 \end{equation}
and
\begin{equation}
 \left \{
\begin{aligned}
 D^{j}=&h\frac{\partial L_j}{\partial q_j}-\sum_{k=1}^m\frac{\partial H_k^j}{\partial q_j} \circ\Delta W_j^k,\\
E^j=&h\frac{\partial L_j}{\partial s_j}-\sum_{k=1}^m\frac{\partial H_k^j}{\partial s_j}\circ \Delta W_j^k.
 \end{aligned}
\right.
\nonumber
 \end{equation}

Then the solutions to the discrete generalized Herglotz variational principle are obtained by
\begin{equation}\label{3.6}
p_j=p_j^-=p_j^+.
 \end{equation}

And the map $(q_j,p_j,s_j)\rightarrow (q_{j+1},p_{j+1},s_{j+1})$ induced by a critical discrete curve preserves the contact structure, i.e., $$ds_{j+1}-p_{j+1}dq_{j+1}=\lambda_j(ds_j-p_jdq_j),j=0,1,...,N-1, almost \ surely,$$
where the conformal factor $\lambda_j$ is
\begin{equation} \label{3.7}
\lambda_j=\frac{1+h\frac{\partial L_j}{\partial s_j}-\sum_{k=1}^m\frac{\partial H_k^j}{\partial s_j}\circ\Delta W_j^k}{1-h\frac{\partial L_j}{\partial s_{j+1}}+\sum_{k=1}^m \frac{\partial H_k^j}{\partial s_{j+1}}\circ\Delta W_j^k}.
\end{equation}

\end{theorem}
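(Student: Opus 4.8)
The plan is to interpret the discrete momentum as the sensitivity of the accumulated action $s$ to the node $q_j$, and to recognize $p_j^{-}$ and $p_j^{+}$ in $(\ref{3.5})$ as this sensitivity computed in two ways: by accumulating the contribution of $q_j$ that reaches $s_j$ through the preceding step, and by tracking the contribution of $q_j$ that propagates out of $s_j$ into the later steps. Throughout I set $\Phi_j:=hL_j-\sum_{k=1}^m H_k^j\circ\Delta W_j^k$, so that $(\ref{3.3})$ reads $s_{j+1}-s_j=\Phi_j$ with $\Phi_j=\Phi_j(q_j,q_{j+1},p_j,p_{j+1},s_j,s_{j+1})$; note that $D^j=\partial\Phi_j/\partial q_j$, $E^j=\partial\Phi_j/\partial s_j$, and write $\bar D^j:=\partial\Phi_j/\partial q_{j+1}$, $\bar E^j:=\partial\Phi_j/\partial s_{j+1}$. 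Because $(\ref{3.3})$ is implicit in $s_{j+1}$, every step is a differentiation of this coupled recursion, carried out pathwise so that the Stratonovich increments $\Delta W_j^k$ act as fixed coefficients almost surely.

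First I would fix $j$ and compute $\sigma_i:=\partial s_i/\partial q_j$ along the chain. Since $q_j$ occurs explicitly only in $\Phi_{j-1}$ (second slot) and in $\Phi_j$ (first slot), one has $\sigma_i=0$ for $i\le j-1$; differentiating $s_j=s_{j-1}+\Phi_{j-1}$ gives $\sigma_j=D^{j-1}+E^{j-1}\sigma_j$, hence $\sigma_j=D^{j-1}/(1-E^{j-1})=p_j^{-}$. Differentiating $s_{j+1}=s_j+\Phi_j$ gives $\sigma_{j+1}=(1+E^j)\sigma_j+D^j+\bar E^j\sigma_{j+1}$, while for $i\ge j+1$ the node $q_j$ is absent explicitly, so $\sigma_{i+1}=\frac{1+E^i}{1-\bar E^i}\,\sigma_i$. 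Telescoping expresses $\sigma_N$ as $\sigma_{j+1}$ times $\prod_{i=j+1}^{N-1}\frac{1+E^i}{1-\bar E^i}$; for small $h$ each factor is an $O(h)$ perturbation of $1$ and thus nonzero almost surely, so the criticality condition $(\ref{3.4})$ is equivalent to $\sigma_{j+1}=0$. Setting $\sigma_{j+1}=0$ in $\sigma_{j+1}=(1+E^j)\sigma_j+D^j+\bar E^j\sigma_{j+1}$ and using $\sigma_j=p_j^{-}$ yields $p_j^{-}(1+E^j)+D^j=0$, i.e. $p_j^{-}=-D^j/(1+E^j)=p_j^{+}$, which is $(\ref{3.6})$.

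For the contact property I would differentiate $s_{j+1}-s_j=\Phi_j$ and collect terms, obtaining $(1-\bar E^j)\,ds_{j+1}=(1+E^j)\,ds_j+D^j\,dq_j+\bar D^j\,dq_{j+1}+\partial_{p_j}\Phi_j\,dp_j+\partial_{p_{j+1}}\Phi_j\,dp_{j+1}$. The momentum relations just obtained rewrite as $D^j=-p_j(1+E^j)$ (from $p_j=p_j^{+}$ at index $j$) and $\bar D^j=p_{j+1}(1-\bar E^j)$ (from $p_{j+1}=p_{j+1}^{-}$ at index $j+1$, since $p_{j+1}^{-}=\bar D^j/(1-\bar E^j)$). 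Substituting these and regrouping the $ds$ and $dq$ terms gives $(1-\bar E^j)\big(ds_{j+1}-p_{j+1}\,dq_{j+1}\big)=(1+E^j)\big(ds_j-p_j\,dq_j\big)$, once the $dp$ contributions are shown to vanish; dividing by $1-\bar E^j$ produces exactly $ds_{j+1}-p_{j+1}dq_{j+1}=\lambda_j(ds_j-p_jdq_j)$ with $\lambda_j=(1+E^j)/(1-\bar E^j)$, and unfolding $E^j,\bar E^j$ recovers $(\ref{3.7})$.

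The main obstacle is precisely the two $dp$ terms: the identity closes in the form $ds-p\,dq$ only if $\partial\Phi_j/\partial p_j=\partial\Phi_j/\partial p_{j+1}=0$. I expect this to follow from the degeneracy $\partial L/\partial\dot p=0$ established in the proof of Theorem 3.5, i.e. the momentum conjugate to $p$ vanishes identically; at the discrete level one makes this rigorous by repeating the accumulation argument of the second paragraph with $\partial s_N/\partial p_j$ in place of $\partial s_N/\partial q_j$, which forces the discrete $p$-momentum $\partial s_j/\partial p_j$ to vanish and hence $\partial\Phi_{j-1}/\partial p_j=\partial\Phi_j/\partial p_j=0$. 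Checking this compatibility of the discretization with the $\dot p$-degeneracy, together with the almost-sure invertibility $1-\bar E^j\ne 0$ and the pathwise handling of the Stratonovich products $\circ\,\Delta W_j^k$, is the delicate part; the remaining manipulations are the routine chain-rule bookkeeping sketched above.
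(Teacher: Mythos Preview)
Your route is the same as the paper's: differentiate the recursion $(\ref{3.3})$ in $q_j$ to obtain the momentum matching $p_j^{-}=p_j^{+}$, then take the total differential of $(\ref{3.3})$ and substitute those momentum relations to extract the conformal factor. One point where you are actually more careful than the paper: the paper stops at the equivalence $p_j^{-}=p_j^{+}\iff\partial s_{j+1}/\partial q_j=0$ and never links this back to the stated criticality condition $\partial s_N/\partial q_j=0$ in~$(\ref{3.4})$, whereas your telescoping product $\prod_{i=j+1}^{N-1}\tfrac{1+E^i}{1-\bar E^i}$ supplies exactly that missing step.

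The place where your proposal does not close is the handling of the $dp$ terms. Rerunning the accumulation argument with $\partial s_N/\partial p_j$ does \emph{not} force $\partial\Phi_{j-1}/\partial p_j=\partial\Phi_j/\partial p_j=0$; all it gives (even granting $\partial s_N/\partial p_j=0$, which Definition~3.6 never imposes) is a matching condition
\[
\frac{\partial\Phi_{j-1}/\partial p_j}{1-\bar E^{j-1}}=-\frac{\partial\Phi_j/\partial p_j}{1+E^j}=:\pi_j,
\]
exactly parallel to $p_j^{-}=p_j^{+}$. Carrying this through your differential identity yields
\[
ds_{j+1}-p_{j+1}\,dq_{j+1}-\pi_{j+1}\,dp_{j+1}=\lambda_j\bigl(ds_j-p_j\,dq_j-\pi_j\,dp_j\bigr),
\]
not the contact form $ds-p\,dq$ unless every $\pi_j$ vanishes. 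The paper's own proof sidesteps this by simply writing $(\ref{3.13})$ without any $dp_j,dp_{j+1}$ contributions, i.e.\ it tacitly assumes that the discrete $L_j$ and $H_k^j$ are independent of $p_j,p_{j+1}$. That assumption is consistent with every discretization in Section~4 (where $\dot q$ is replaced by $(q_{j+1}-q_j)/h$ and no $p$'s enter $L_j$) and is the natural discrete counterpart of the continuous degeneracy $\partial L/\partial\dot p=0$ used in Theorem~3.5; but it is a hypothesis on the choice of discretization, not a consequence of the variational principle, so you should state it as such rather than try to deduce it.
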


\begin{proof}
First, we will prove the discrete stochastic Herglotz variational principle.
It follows from equation $(\ref{3.3})$ that
\begin{equation}
\begin{aligned}
\frac{\partial s_{j+1}}{\partial q_j}&=\frac{\partial s_j}{\partial q_j}+h\frac{\partial L_j}{\partial q_j}
+h\frac{\partial L_j}{\partial s_j}\frac{\partial s_j}{\partial q_j}+h\frac{\partial L_j}{\partial s_{j+1}}\frac{\partial s_{j+1}}{\partial q_j}\\
&-\sum_{k=1}^m\Big(\frac{\partial H_k^j}{\partial q_j}+\frac{\partial H_k^j}{\partial s_j} \frac{\partial s_j}{\partial q_j}+\frac{\partial H_k^j}{\partial s_{j+1}} \frac{\partial s_{j+1}}{\partial q_j}\Big )\circ\Delta W_j^k.
\end{aligned}
\nonumber
\end{equation}

Then we have
\begin{equation}\label{3.8}
\begin{aligned}
\Big[1-h\frac{\partial L_j}{\partial s_{j+1}}-\sum_{k=1}^m\frac{\partial H_k^j}{\partial s_{j+1}}\circ\Delta W_j^k\Big]&\frac{\partial s_{j+1}}{\partial q_j}
=h\frac{\partial L_j}{\partial q_j}-\sum_{k=1}^m\frac{\partial H_k^j}{\partial q_j}\circ\Delta W_j^k\\
&+\Big[1+h\frac{\partial L_j}{\partial s_j}-\sum_{k=1}^m\frac{\partial H_k^j}{\partial s_j}\circ\Delta W_j^k\Big ]\frac{\partial s_j}{\partial q_j}.
\end{aligned}
\end{equation}
We also follows from $(\ref{3.3})$ that
\begin{equation}\label{3.9}
\begin{aligned}
s_{j}-s_{j-1}=hL_{j-1}-\sum_{k=1}^mH_k^{j-1}\circ\Delta W_{j-1}^k,
\end{aligned}
\end{equation}
where $$L_{j-1}=:L(q_{j-1},q_j,p_{j-1},p_j,s_{j-1},s_j), H_k^{j-1}=:H_k(q_{j-1},q_j,p_{j-1},p_j,s_{j-1},s_j),$$
and
$$\Delta W_{j-1}^k=W^k(t_j)-W^k(t_{j-1}).$$
So we can obtain from $(\ref{3.9})$ that
\begin{equation}\label{3.10}
\begin{aligned}
\frac{\partial s_{j}}{\partial q_j}=h\frac{\partial L_{j-1}}{\partial q_j}
+h\frac{\partial L_{j-1}}{\partial s_j}\frac{\partial s_j}{\partial q_j}
-\sum_{k=1}^m\Big(\frac{\partial H_k^{j-1}}{\partial q_j}+\frac{\partial H_k^{j-1}}{\partial s_j} \frac{\partial s_j}{\partial q_j}\Big )\circ\Delta W_{j-1}^k.
\end{aligned}
\end{equation}
Then it follows from $(\ref{3.10})$ that
\begin{equation}\label{3.11}
\begin{aligned}
\frac{\partial s_j}{\partial q_j}
=\frac{h\frac{\partial L_{j-1}}{\partial q_j}-\sum_{k=1}^m\frac{\partial H_k^{j-1}}{\partial q_j}\circ\Delta W_{j-1}^k}{1-h\frac{\partial L_{j-1}}{\partial s_j}+\sum_{k=1}^m\frac{\partial H_k^{j-1}}{\partial s_j}\circ\Delta W_{j-1}^k}.
\end{aligned}
\end{equation}
Substituting $(\ref{3.11})$ into $(\ref{3.8})$ gives
\begin{equation}\label{3.12}
\begin{aligned}
\Big[1-h\frac{\partial L_j}{\partial s_{j+1}}-\sum_{k=1}^m\frac{\partial H_k^j}{\partial s_{j+1}}\circ\Delta W_j^k\Big]\frac{\partial s_{j+1}}{\partial q_j}
=h\frac{\partial L_j}{\partial q_j}-\sum_{k=1}^m\frac{\partial H_k^j}{\partial q_j}\circ\Delta W_j^k\\
+\Big[1+h\frac{\partial L_j}{\partial s_j}-\sum_{k=1}^m\frac{\partial H_k^j}{\partial s_j}\circ\Delta W_j^k\Big ]\cdot
\frac{h\frac{\partial L_{j-1}}{\partial q_j}-\sum_{k=1}^m\frac{\partial H_k^{j-1}}{\partial q_j}\circ\Delta W_{j-1}^k}{1-h\frac{\partial L_{j-1}}{\partial s_j}+\sum_{k=1}^m\frac{\partial H_k^{j-1}}{\partial s_j}\circ\Delta W_{j-1}^k}.
\end{aligned}
\nonumber
\end{equation}
Due to $(\ref{3.5})$ and $(\ref{3.6})$, we obtain that
solutions to the discrete generalized Herglotz variational principle are obtained by
$$p_j=p_j^-=p_j^+,$$
if and only if the equality holds $$\frac{\partial s_{j+1}}{\partial q_j}=0.$$

Second, we will prove the preserve of the contact structure. From $(\ref{3.3})$ it follows
\begin{equation}\label{3.13}
\begin{aligned}
ds_{j+1}&-ds_j=h\frac{\partial L_j}{\partial q_j}dq_j+h\frac{\partial L_j}{\partial q_{j+1}}dq_{j+1}
+h\frac{\partial L_j}{\partial s_j}ds_j+h\frac{\partial L_j}{\partial s_{j+1}}ds_{j+1}\\
&-\sum_{k=1}^m\Big[\frac{\partial H_k^j}{\partial q_j}dq_j+\frac{\partial H_k^j}{\partial q_{j+1}}dq_{j+1}+\frac{\partial H_k^j}{\partial s_j}ds_j+\frac{\partial H_k^j}{\partial s_{j+1}}ds_{j+1}\Big]\circ\Delta W_j^k.
\end{aligned}
\end{equation}
Motivated by $(\ref{3.5})$, $(\ref{3.6})$ and $(\ref{3.13})$, we have
\begin{equation}
\begin{aligned}
&\Big[1-h\frac{\partial L_j}{\partial s_{j+1}}+\sum_{k=1}^m \frac{\partial H_k^j}{\partial s_{j+1}}\circ\Delta W_j^k\Big]ds_{j+1}-\Big[h\frac{\partial L_j}{\partial q_{j+1}}-\sum_{k=1}^m\frac{\partial H_k^j}{\partial q_{j+1}}\circ\Delta W_j^k\Big]dq_{j+1}\\
&=\Big[1+h\frac{\partial L_j}{\partial s_j}-\sum_{k=1}^m\frac{\partial H_k^j}{\partial s_j}\circ\Delta W_j^k\Big]ds_j
+\Big[h\frac{\partial L_j}{\partial q_j}-\sum_{k=1}^m\frac{\partial H_k^j}{\partial q_j}\circ\Delta W_j^k\Big]dq_j\\
&=\Big[1+h\frac{\partial L_j}{\partial s_j}-\sum_{k=1}^m\frac{\partial H_k^j}{\partial s_j}\circ\Delta W_j^k\Big]\cdot \Big [ds_j-
\frac{h\frac{\partial L_{j-1}}{\partial q_j}-\sum_{k=1}^m\frac{\partial H_k^{j-1}}{\partial q_j}\circ\Delta W_{j-1}^k}{1-h\frac{\partial L_{j-1}}{\partial s_j}+\sum_{k=1}^m\frac{\partial H_k^{j-1}}{\partial s_j}\circ\Delta W_{j-1}^k}dq_j\Big].
\end{aligned}
\nonumber
\end{equation}
Therefore, we obtain that
\begin{equation}\label{3.14}
\begin{aligned}
&ds_{j+1}-\frac{h\frac{\partial L_j}{\partial q_{j+1}}-\sum_{k=1}^m\frac{\partial H_k^j}{\partial q_{j+1}}\circ\Delta W_j^k}{1-h\frac{\partial L_j}{\partial s_{j+1}}+\sum_{k=1}^m \frac{\partial H_k^j}{\partial s_{j+1}}\circ\Delta W_j^k}dq_{j+1}\\
&=\frac{1+h\frac{\partial L_j}{\partial s_j}-\sum_{k=1}^m\frac{\partial H_k^j}{\partial s_j}\circ\Delta W_j^k}{1-h\frac{\partial L_j}{\partial s_{j+1}}+\sum_{k=1}^m \frac{\partial H_k^j}{\partial s_{j+1}}\circ\Delta W_j^k}\cdot \Bigg [ds_j-
\frac{h\frac{\partial L_{j-1}}{\partial q_j}-\sum_{k=1}^m\frac{\partial H_k^{j-1}}{\partial q_j}\circ\Delta W_{j-1}^k}{1-h\frac{\partial L_{j-1}}{\partial s_j}+\sum_{k=1}^m\frac{\partial H_k^{j-1}}{\partial s_j}\circ\Delta W_{j-1}^k}dq_j\Bigg].
\end{aligned}
\end{equation}
Together with $(\ref{3.5})-(\ref{3.6})$, equation $(\ref{3.14})$ implies
$$ds_{j+1}-p_{j+1}dq_{j+1}=\lambda_j( ds_j-p_jdq_j).$$
Here the conformal factor $\lambda_j$ is
\begin{equation}
\lambda_j=\frac{1+h\frac{\partial L_j}{\partial s_j}-\sum_{k=1}^m\frac{\partial H_k^j}{\partial s_j}\circ\Delta W_j^k}{1-h\frac{\partial L_j}{\partial s_{j+1}}+\sum_{k=1}^m \frac{\partial H_k^j}{\partial s_{j+1}}\circ\Delta W_j^k},
\nonumber
\end{equation}
which is the same as $(\ref{3.7})$.

The proof of Theorem 3.8 is finished.
\end{proof}

\begin{remark}
By Taylor expansion, it is clear that
\begin{equation}
\begin{aligned}
\lambda_j=1+\Big[(h\frac{\partial L_j}{\partial s_j}-\sum_{k=1}^m\frac{\partial H_k^j}{\partial s_j}\circ\Delta W_j^k)+(h\frac{\partial L_j}{\partial s_{j+1}}-\sum_{k=1}^m\frac{\partial H_k^j}{\partial s_{j+1}}\circ\Delta W_j^k)\Big]+o(h^2)\\
=\exp\Big[h(\frac{\partial L_j}{\partial s_j}+\frac{\partial L_j}{\partial s_{j+1}})-\sum_{k=1}^m(\frac{\partial H_k^j}{\partial s_j}+\frac{\partial H_k^j}{\partial s_{j+1}})\circ\Delta W_j^k\Big] +o(h^2).
\end{aligned}
\nonumber
\end{equation}
Therefore, the conformal factor $\lambda_j$  is consistent with the continuous case \cite{Wei}
\begin{equation}
\lambda(t_j)=\exp\Big[\int_{t_j}^{t_{j+1}} \frac{\partial L}{\partial s} d\tau-\sum_{k=1}^m\int_{t_j}^{t_{j+1}}\frac{\partial H_k}{\partial s}\circ dW_j^k\Big].
\nonumber
\end{equation}
\end{remark}

\newpage

\section{Numerical Experiments}

We apply Theorem 3.8 to
construct contact schemes via stochastic Herglotz variational principle for several stochastic contact Hamiltonian systems. The resulted schemes show the validity of stochastic Herglotz variational principle and the effectiveness of the numerical experiments.

\subsection{Example 1}

We consider the following stochastic damped mechanical system driven by additive Gaussian noise,
\begin{equation}\label{4.1}
 \left \{
\begin{aligned}
dq=&pdt,\\
dp=&(-V'(q)-\alpha p)dt,\\
ds=&(p^2-H_0)dt-\varepsilon \circ dW(t),
 \end{aligned}
\right.
 \end{equation}
where the coordinates $q,p$ and $s$ are one dimension. It is possible to expand them to high dimensions. The constant $\alpha$ is positive, $V(q)$ is a potential function, and the Hamiltonians are of the form
$$H_0=\frac{1}{2}p^2+V(q)+\alpha s,\ \ \ \ H_1=\varepsilon.$$
Obviously, it is a special linear stochastic contact Hamiltonian systems with additive Gaussian noise.

Motivated by \cite{Cieslinski, Musielak, Vermeeren}, we can obtain the Lagrangian function $L$ with respect to the deterministic part of $(\ref{4.1})$ is
$$L=p\dot{q}-H_0=\frac{1}{2}\dot{q}^2-V(q)-\alpha s,$$
where $\dot{q}=\frac{dq}{dt}=p$.

One of the discretizations of the Lagrangian $L$ is
\begin{equation}\label{4.2}
\begin{split}
L_j&=L(q_j,q_{j+1},p_j,p_{j+1},s_j,s_{j+1})\\
&=\frac{1}{2}(\frac{q_{j+1}-q_j}{h})^2-\frac{V(q_j)+V(q_{j+1})}{2}-\alpha s_j,
 \end{split}
\end{equation}
where the midpoint quadrature is applied to approximate the differential, $\dot{q}\approx \frac{q_{j+1}-q_j}{h}$, with $\Delta W_j=W(t_{j+1})-W(t_j)$, and
$\Delta t_j=t_{j+1}-t_j, j=0,1,..,N$.

Substituting the expression of $(\ref{4.2})$ into the relations $(\ref{3.5})$ and $(\ref{3.6})$, we get
\begin{equation}
\begin{aligned}
p_j=
\frac{h\frac{\partial L_{j-1}}{\partial q_j}-\frac{\partial H_1^{j-1}}{\partial q_j}\circ\Delta W_{j-1}}{1-h\frac{\partial L_{j-1}}{\partial s_j}+\frac{\partial H_1^{j-1}}{\partial s_j}\circ\Delta W_{j-1}}=\frac{q_j-q_{j-1}}{h}-\frac{V'(q_j)}{2}h.
\end{aligned}
\nonumber
\end{equation}
and
\begin{equation}
\begin{aligned}
p_{j-1}=-
\frac{h\frac{\partial L_{j-1}}{\partial q_{j-1}}-\frac{\partial H_1^{j-1}}{\partial q_{j-1}}\circ\Delta W_{j-1}}{1+h\frac{\partial L_{j-1}}{\partial s_{j-1}}-\frac{\partial H_1^{j-1}}{\partial s_{j-1}}\circ\Delta W_{j-1}}=\frac{\frac{q_j-q_{j-1}}{h}+\frac{V'(q_{j-1})}{2}h}{1-h\alpha}.
\end{aligned}
\nonumber
\end{equation}

Therefore, the contact scheme of $(\ref{4.1})$ can be obtained explicitly,
\begin{equation}\label{4.3}
\left \{
\begin{aligned}
q_j&=q_{j-1}+h(1-h\alpha)p_{j-1}-\frac{h^2}{2}V'(q_{j-1}),\\
p_j&=(1-h\alpha)p_{j-1}-\frac{h}{2}\Big[V'(q_{j-1})+V'(q_j)\Big],\\
s_j&=s_{j-1}+\frac{1}{2h}(q_j-q_{j-1})^2-\frac{h}{2}\Big[V(q_j)+V(q_{j-1})\Big]\\
&-\alpha h s_{j-1}-\varepsilon \Delta W_{j-1}.
\end{aligned}
\right.
\end{equation}

The next part is devoted to the preservation of contact structure  of the scheme $(\ref{4.3})$, whose conformal factor is $\lambda_j=\exp(-\alpha \Delta t_j)$ in the continuous time case, and the comparison with Euler-Maruyama scheme, i.e., non-contact scheme. The Euler-Maruyama scheme of $(\ref{4.1})$ is
\begin{equation}\label{4.4}
\left \{
\begin{aligned}
q_j&=q_{j-1}+hp_{j-1},\\
p_j&=p_{j-1}-h\Big[V'(q_{j-1})+\alpha p_{j-1}\Big],\\
s_j&=s_{j-1}+\frac{1}{2}h(p_{j-1})^2-hV(q_{j-1})-\alpha h s_{j-1}-\varepsilon \Delta W_{j-1}.
\end{aligned}
\right.
\end{equation}

To study the difference of the contact and non-contact scheme, we can compare long time behaviors of the numerical solutions obtained by the schemes $(\ref{4.3})$ and $(\ref{4.4})$. In order to improve the accuracy of the comparison, let $V(q)=\frac{1}{2}q^2$, and the initial conditions are listed as follows. That is, the step size is $h=0.1$, $T=20.0$, $\alpha=0.1$, $\varepsilon=0.02$, $N=200.0$ and the initial value is $q(0)=0.75,p(0)=-0.25,s(0)=0.08$.

 \begin{figure}[H]
   \centering
    \begin{minipage}{6.5cm}
       \includegraphics[width=2.8in, height=2.0in]{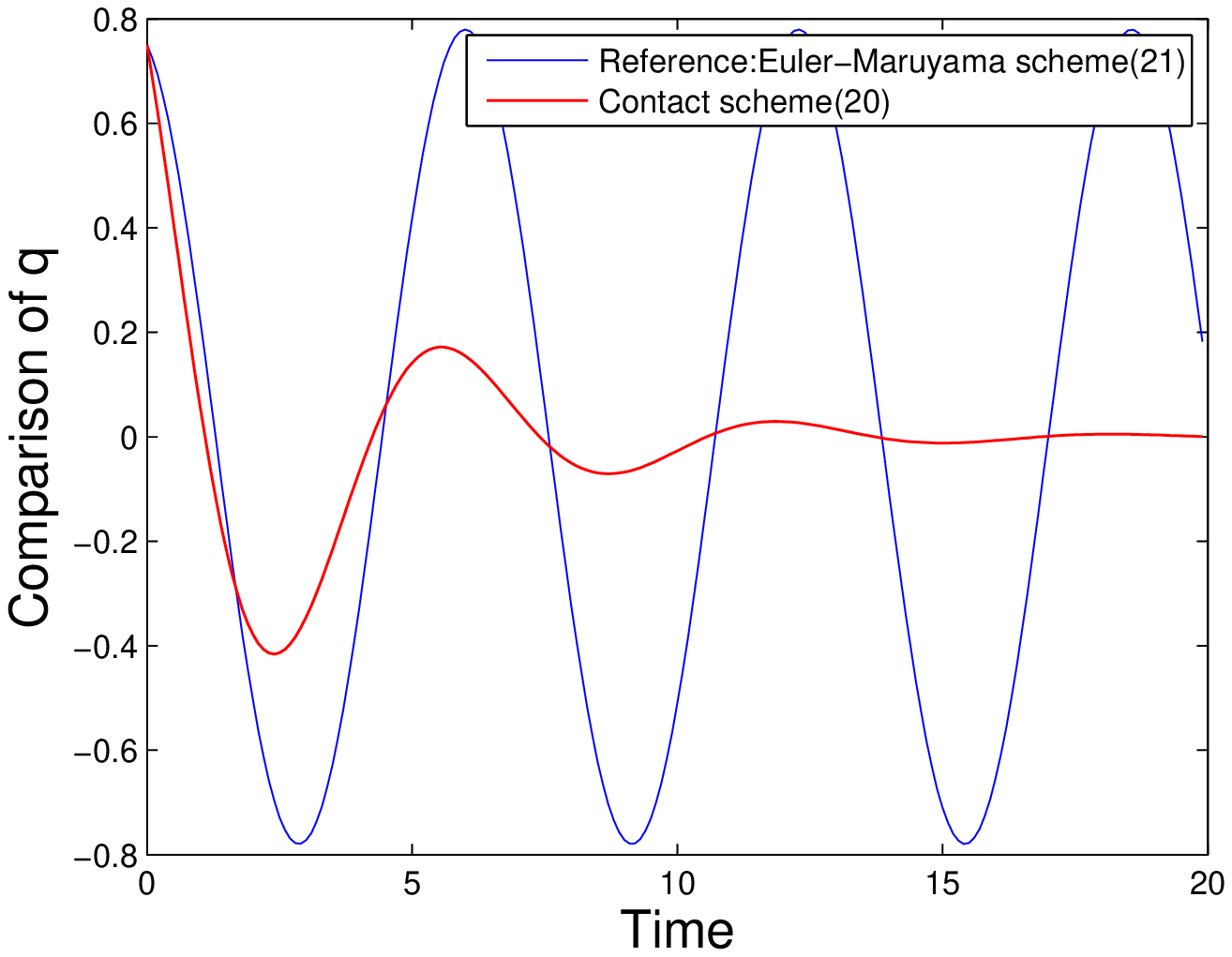}
    \end{minipage}
   \\
   \begin{minipage}{6.5cm}
       \includegraphics[width=2.8in, height=2.0in]{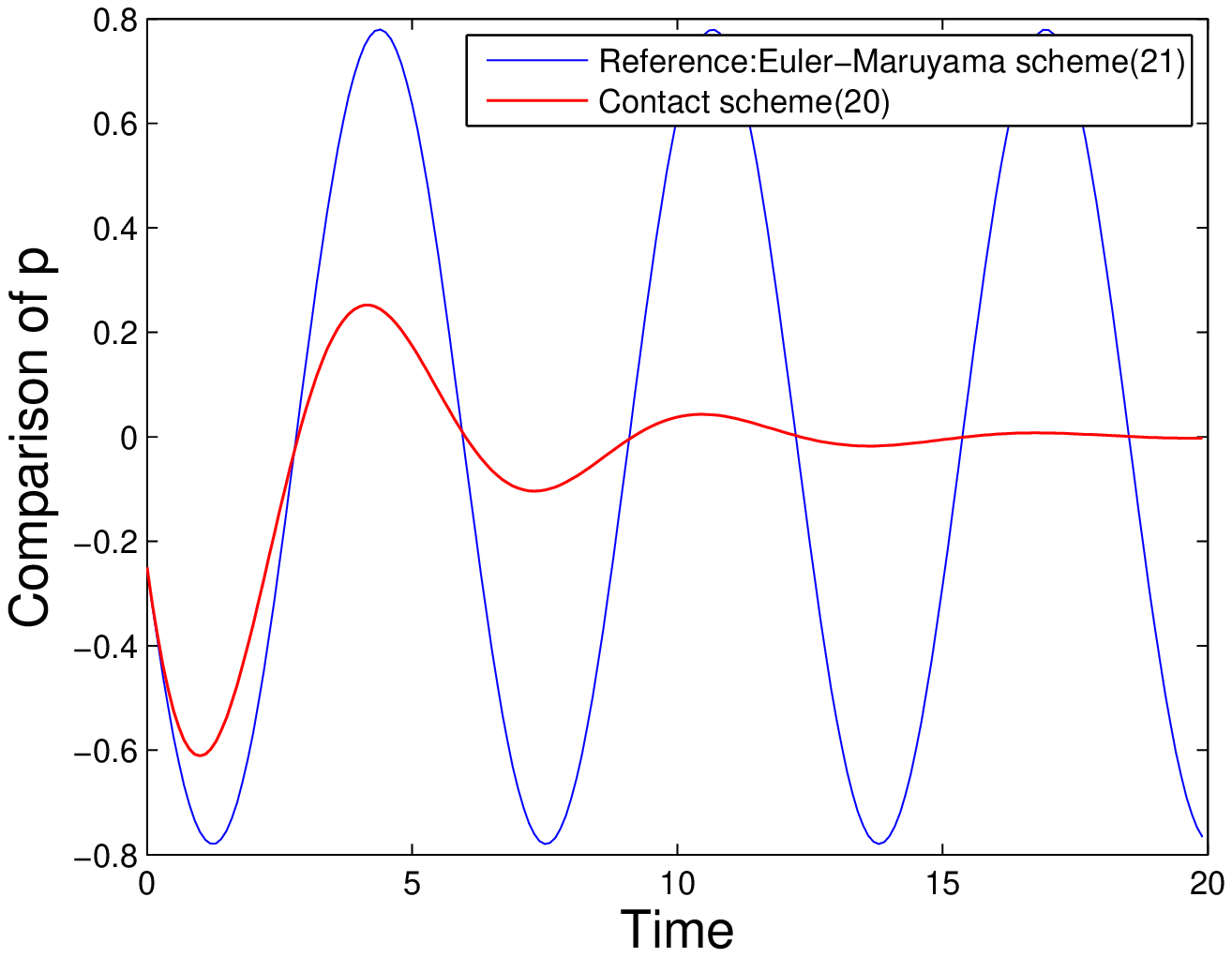}
    \end{minipage}
    \begin{minipage}{6.5cm}
       \includegraphics[width=2.8in, height=2.0in]{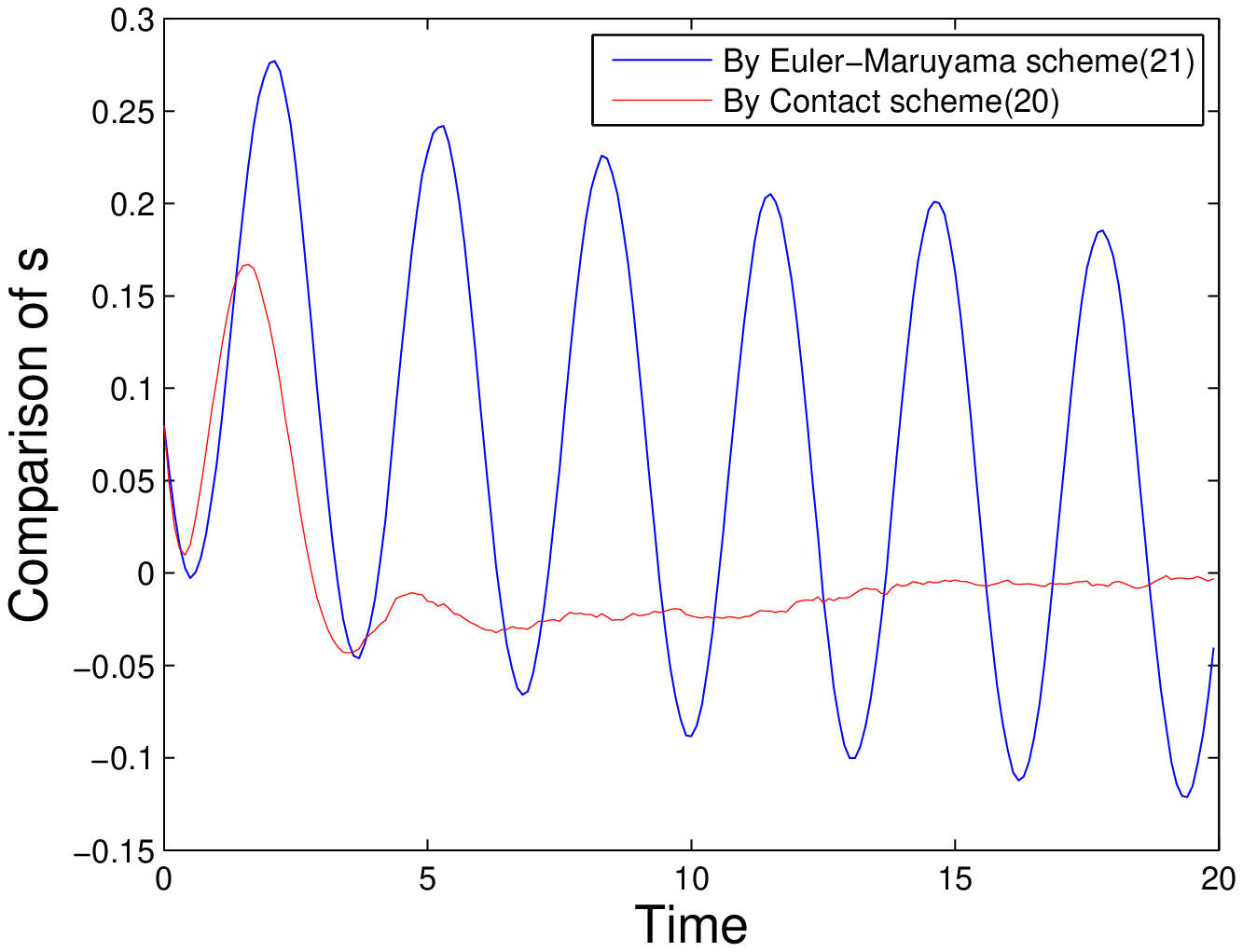}
    \end{minipage}
    \par{\scriptsize  Fig.1. Comparison of sample trajectories of $(\ref{4.3})$  and $(\ref{4.4})$ in the coordinates $q,p$ and $s$, respectively.}
\end{figure}
 As shown in Fig.1, the contact scheme has better performance than the non-contact scheme. That is, the dissipation  phenomenon of SDE $(\ref{4.1})$ is better simulated by contact scheme $(\ref{4.3})$ than non-contact scheme $(\ref{4.4})$.

\begin{figure}[H]
\centering
\includegraphics[width=3.8in, height=1.6in]{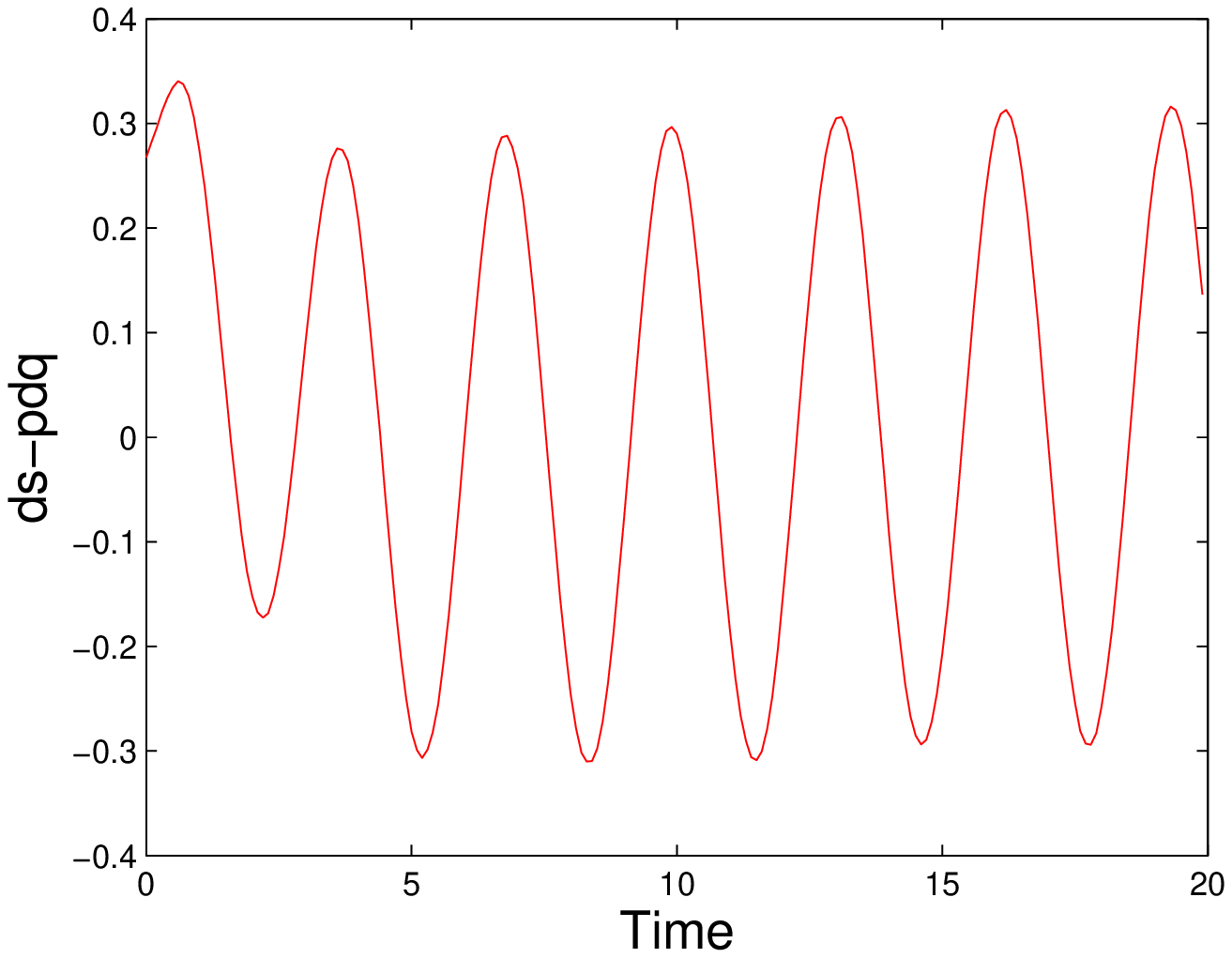}
\par{\scriptsize   Fig.2.  Preservation of contact structure $(\ref{4.1})$ .}
\end{figure}

 \begin{figure}[H]
\centering
\includegraphics[width=3.8in, height=2.0in]{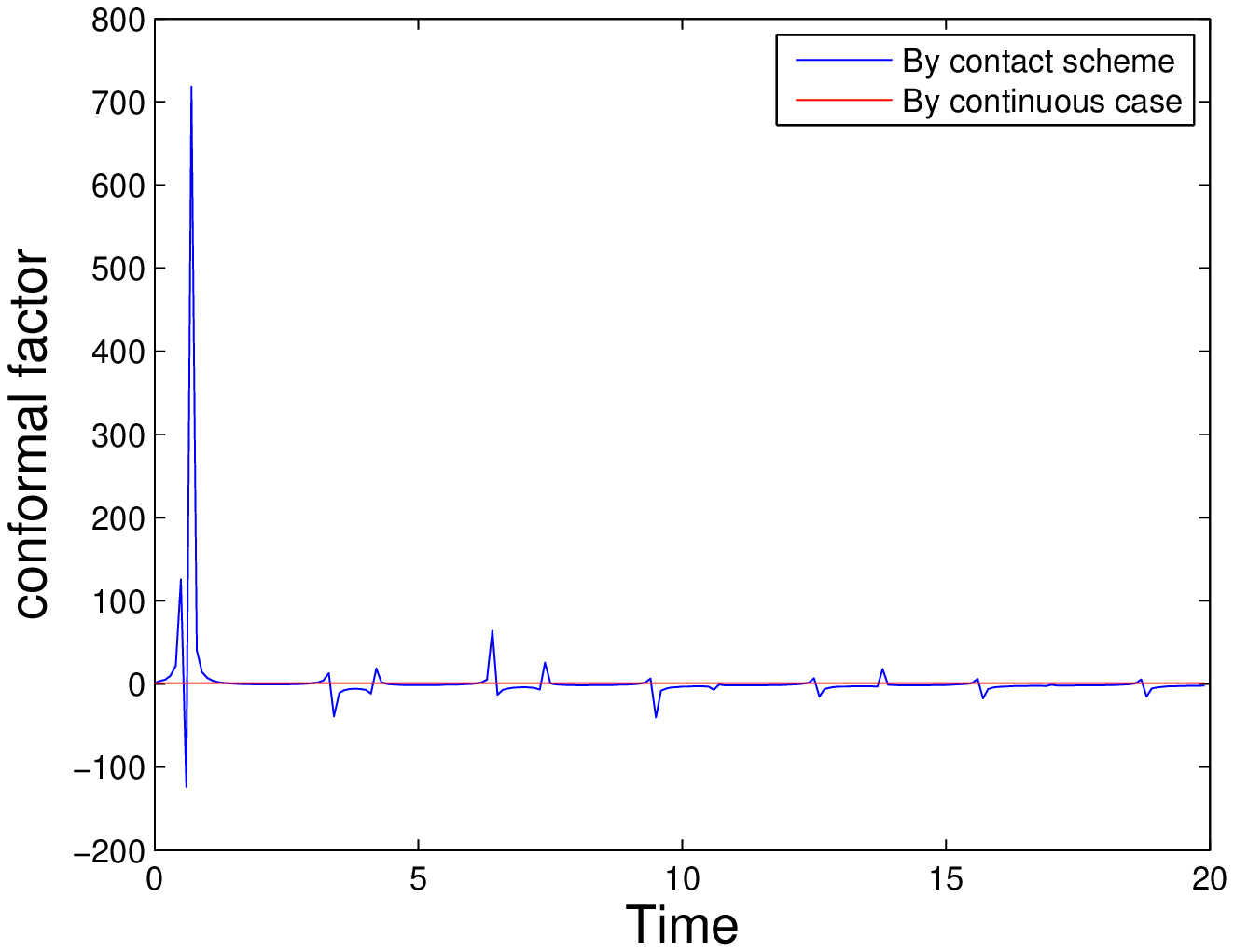}
\par{\scriptsize   Fig.3.  Comparison of the conformal factor of contact scheme $(\ref{4.3})$  and the continuous case of $(\ref{4.1})$.}
\end{figure}

 It follows from Fig.2 that the contact structure of $(\ref{4.1})$ is almost surely preserved, although there are some small size perturbations.
 Fig.3 shows the fact that the conformal factor is preserved well by the contact scheme. Due to continuous inputting of the Gaussian noise, the curve has some vibrations in some uncertain time moments, but it almost lies near the straight line of  the continuous case, whose value is $\lambda=\exp(-\alpha h)$. Therefore, these phenomena verify the results of Theorem 3.8.

\subsection{Example 2}
Now we illustrate through another example the method of stochastic Herglotz variational principle in constructing contact schemes. For a more general case, we consider the following stochastic damped mechanical systems driven by multiplicative Gaussian noise,
\begin{equation}\label{4.5}
 \left \{
\begin{aligned}
dq=&pdt,\\
dp=&-\Big[V'(q)+\alpha s\dot{q}\Big ]dt-\cos q\circ dW(t),\\
ds=&\Big[\frac{1}{2}(\dot{q})^2-V(q)-\frac{1}{2}\alpha s^2\Big ]dt-\sin q \circ dW(t),
 \end{aligned}
\right.
 \end{equation}
where the coordinates $q,p$ and $s$ are one dimension. And the parameters  $\alpha$  and $V(q)$ are as same as Section 4.1. The Hamiltonians are of the form
 $$H_0=\frac{1}{2}p^2+V(q)+\frac{1}{2}\alpha s^2,\ \ \ H_1=\sin q.$$
It follows from \cite{Cieslinski, Musielak, Vermeeren} that the Lagrangian function $L$ with respect to the deterministic part of $(\ref{4.5})$ is
$$L=p\dot{q}-H_0=\frac{1}{2}\dot{q}^2-V(q)-\frac{1}{2}\alpha s^2.$$

We consider a discrete form of the Lagrangian as follows
\begin{equation}\label{4.6}
\begin{split}
L_j=\frac{1}{2}(\frac{q_{j+1}-q_j}{h})^2-\frac{V(q_j)+V(q_{j+1})}{2}-\frac{1}{4}\alpha s_j^2-\frac{1}{4}\alpha s_{j+1}^2.
 \end{split}
\end{equation}
Substituting the expression of $(\ref{4.6})$ into the relations $(\ref{3.5})$ and $(\ref{3.6})$, we have
\begin{equation}
\begin{aligned}
p_j=\frac{\frac{q_j-q_{j-1}}{h}-\frac{V'(q_j)}{2}h}{1+\frac{h}{2}\alpha s_j}.
\end{aligned}
\nonumber
\end{equation}
and
\begin{equation}
\begin{aligned}
p_{j-1}=\frac{\frac{q_j-q_{j-1}}{h}+\frac{V'(q_{j-1})}{2}h+\cos(q_{j-1})\Delta W_{j-1}}{1-\frac{h}{2}\alpha s_{j-1}}.
\end{aligned}
\nonumber
\end{equation}

Therefore, an implicit contact scheme of $(\ref{4.5})$ can be obtained,
\begin{equation}\label{4.7}
\left \{
\begin{aligned}
q_j=&q_{j-1}+h(1-\frac{1}{2}h\alpha s_{j-1})p_{j-1}-h\cos(q_{j-1})\circ\Delta W_{j-1}
-\frac{V'(q_{j-1})}{2}h^2,\\
p_j=&\Big[(1-\frac{1}{2}h\alpha s_{j-1})p_{j-1}-\cos(q_{j-1})\circ\Delta W_{j-1}\\
    &-\frac{1}{2}h(V'(q_j)+V'(q_{j-1}))\Big]/(1+\frac{1}{2}h\alpha s_j),\\
s_j=&s_{j-1}+\frac{1}{2h}(q_j-q_{j-1})^2-\frac{h}{2}\Big[V(q_j)+V(q_{j-1})\Big]\\
   &-\frac{1}{4}\alpha h s^2_{j-1}-\frac{1}{4}\alpha h s^2_{j}-\sin(q_{j-1})\circ \Delta W_{j-1}.
\end{aligned}
\right.
\end{equation}

It is the position that we devote to the preservation of contact structure of the scheme $(\ref{4.7})$, whose conformal factor is $\lambda_j=\exp(-\alpha \Delta t_j)$ in the continuous case, and the comparison with Euler-Maruyama scheme. The Euler-Maruyama scheme of $(\ref{4.5})$ is shown in the form of

\begin{equation}\label{4.8}
\left \{
\begin{aligned}
q_j&=q_{j-1}+hp_{j-1},\\
p_j&=p_{j-1}-h\Big[V'(q_{j-1})+\alpha s_{j-1}\frac{q_j-q_{j-1}}{h}\Big]-\cos (q_{j-1}) \circ \Delta W_{j-1},\\
s_j&=s_{j-1}+h\Big[\frac{1}{2}(\frac{q_j-q_{j-1}}{h})^2-V(q_{j-1})-\frac{1}{2}\alpha s_{j-1}^2\Big]-\sin (q_{j-1}) \circ \Delta W_{j-1}.
\end{aligned}
\right.
\end{equation}
  To illustrate the performances of contact scheme $(\ref{4.7})$, the initial conditions are the same as those in Section 4.1.

 \begin{figure}[H]
   \centering
    \begin{minipage}{6.5cm}
       \includegraphics[width=2.8in, height=2.0in]{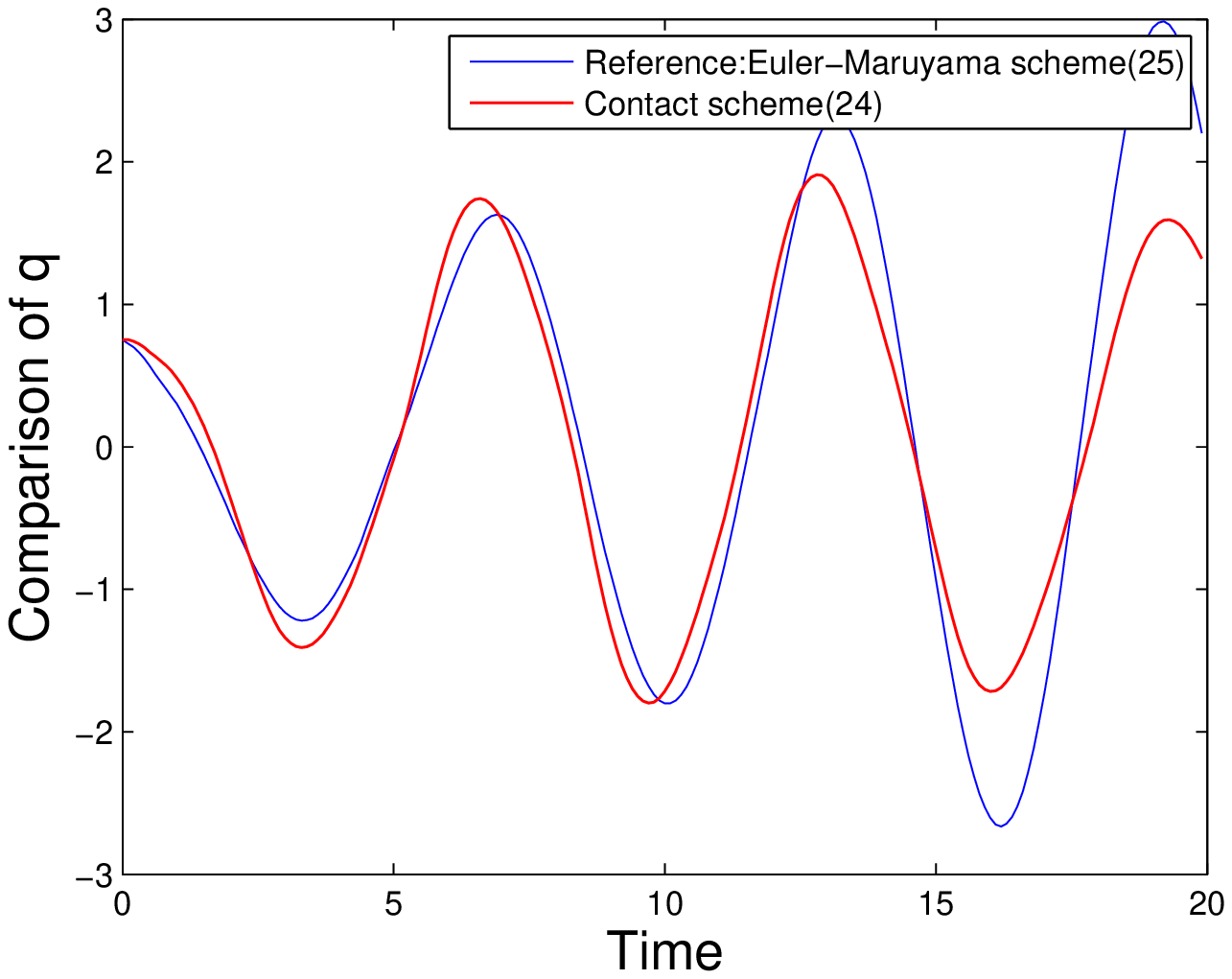}
    \end{minipage}
   \\
   \begin{minipage}{6.5cm}
       \includegraphics[width=2.8in, height=2.0in]{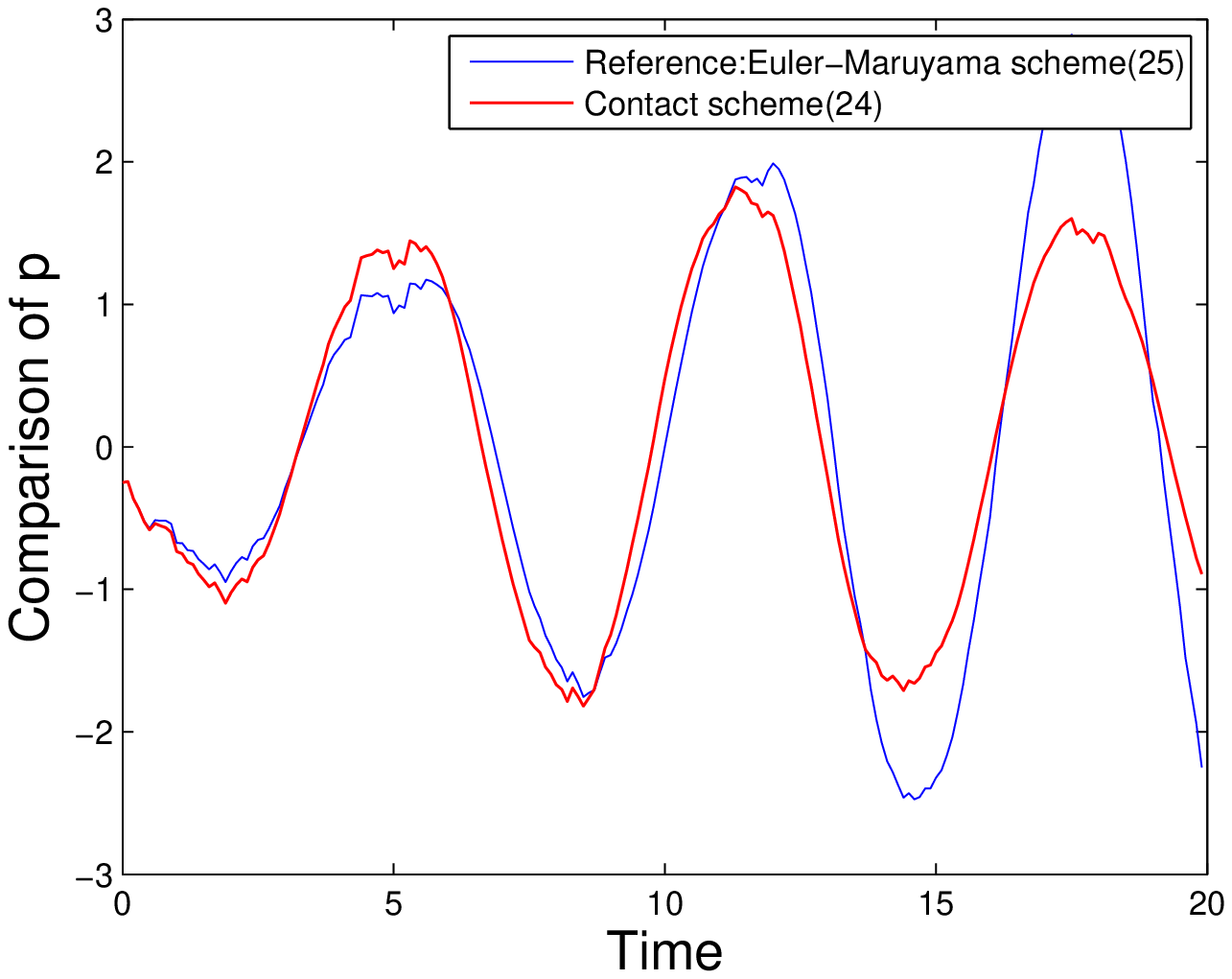}
    \end{minipage}
    \begin{minipage}{6.5cm}
       \includegraphics[width=2.8in, height=2.0in]{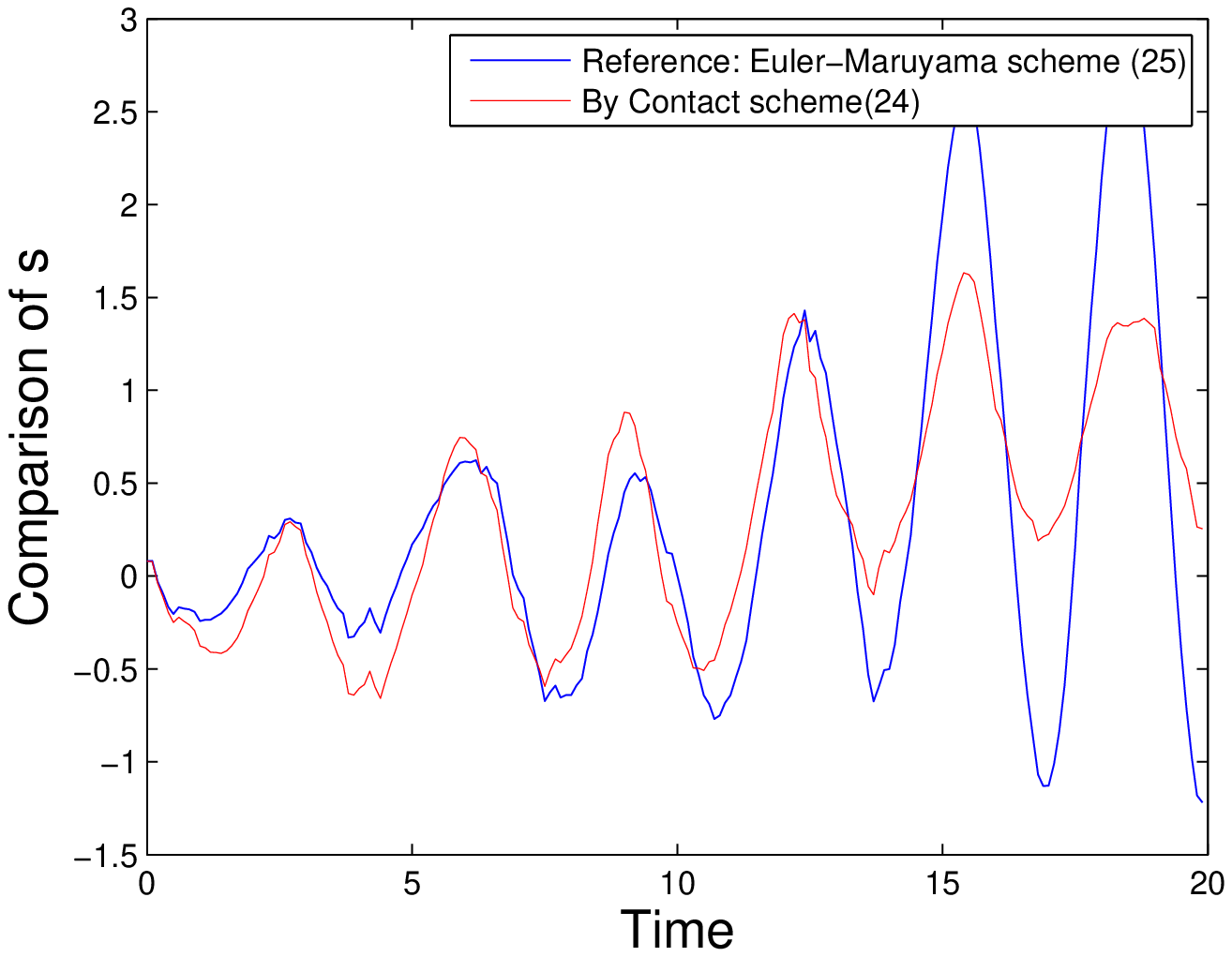}
    \end{minipage}
    \par{\scriptsize  Fig.4. Comparison of sample trajectories of $(\ref{4.7})$  and $(\ref{4.8})$ in the coordinates $q,p$ and $s$, respectively.}
\end{figure}
As we can see from Fig.4, the contact scheme has better performance than the non-contact scheme in the simulation of dissipation.

\begin{figure}[H]
\centering
\includegraphics[width=3.8in, height=1.8in]{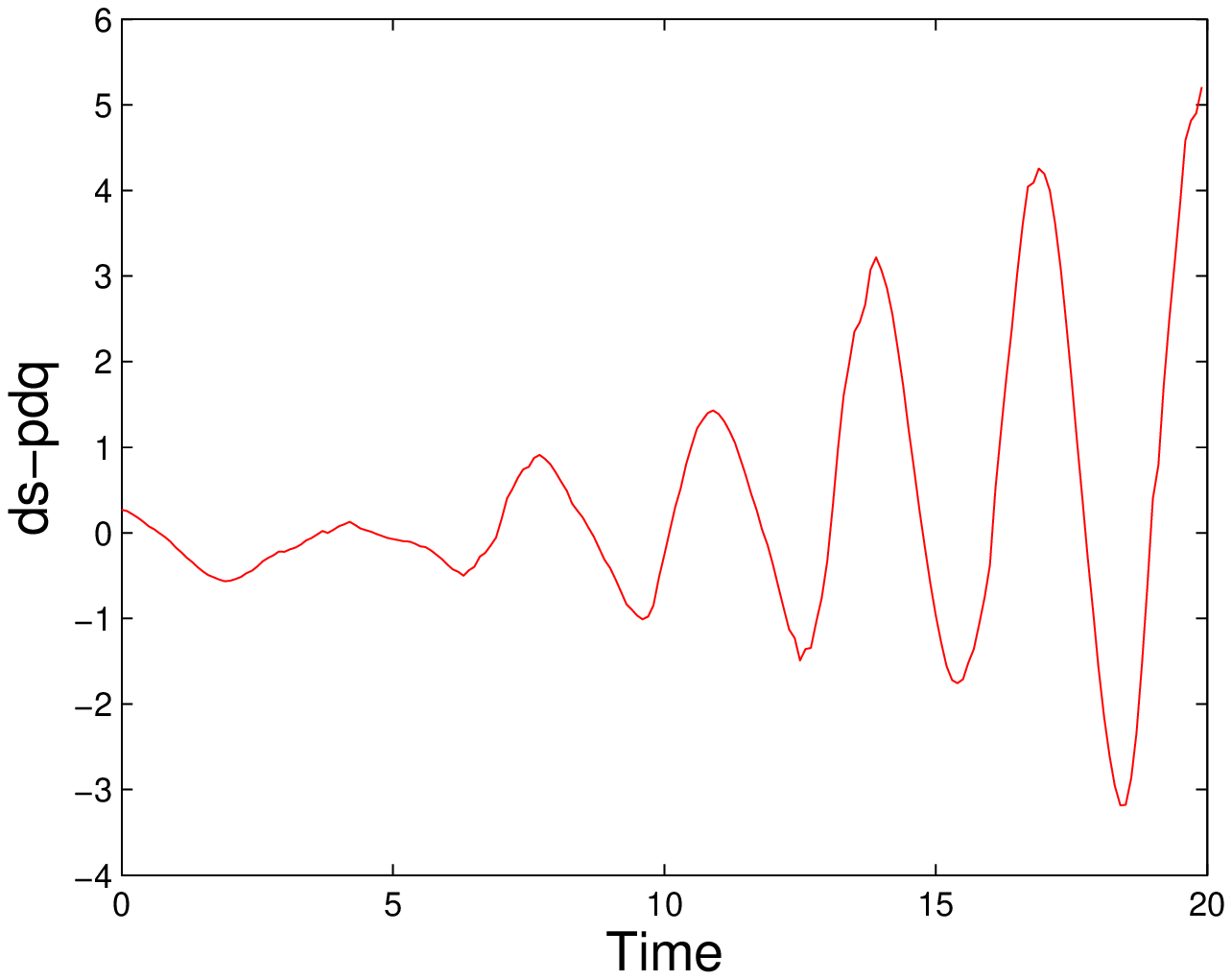}
\par{\scriptsize   Fig.5.  Preservation of contact structure $(\ref{4.5})$ .}
\end{figure}

 \begin{figure}[H]
\centering
\includegraphics[width=3.8in, height=2.2in]{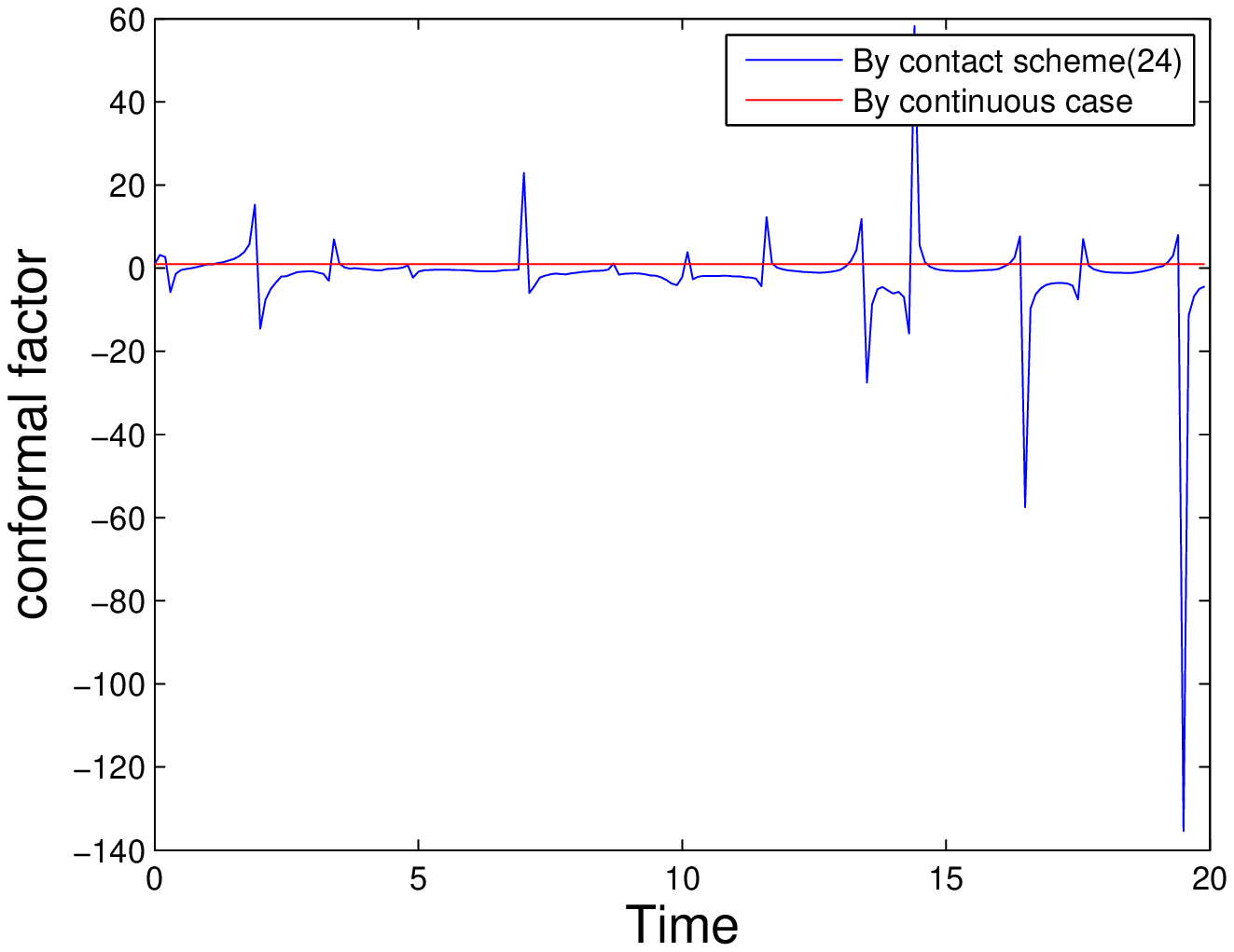}
\par{\scriptsize   Fig.6.  Comparison of the conformal factor of contact scheme $(\ref{4.7})$ and the continuous case of $(\ref{4.5})$.}
\end{figure}

It follows from Fig.5 and Fig.6 that the contact structure and conformal factor are preserved well almost surely, too. Therefore, these phenomena verify the results of Theorem 3.8. That is, it is the preservation of the contact structure that makes the outperformance.

\subsection{Example 3}
Now we consider the following stochastic Kepler problem equations \cite{Margheri},
\begin{equation}\label{4.9}
 \left \{
\begin{aligned}
dq=&pdt,\\
dp=&-\Big[\frac{1}{q^2}+\beta p\Big ]dt-\gamma \circ dW(t),\\
ds=&\Big[\frac{p^2}{2}+\frac{1}{|q|}-\beta s\Big ]dt-\gamma q \circ dW(t),
 \end{aligned}
\right.
 \end{equation}
where  $q,p,s\in \mathbb{R}$. The Hamiltonians are
 $$H_0=\frac{|p|^2}{2}-\frac{1}{|q|}+\beta s,\ \ \ H_1= \gamma q.$$
The Lagrangian function $L$ with respect to the deterministic part of $(\ref{4.9})$ is
$$L=p\dot{q}-H_0=\frac{\dot{q}^2}{2}+\frac{1}{|q|}-\beta s.$$

We consider one discrete form of the Lagrangian
\begin{equation}\label{4.10}
\begin{split}
L_j=\frac{1}{2}(\frac{q_{j+1}-q_j}{h})^2+\frac{2}{|q_j+q_{j+1}|}-\frac{1}{2}\beta(s_j+s_{j+1}).
 \end{split}
\end{equation}
Substituting the expression of $(\ref{4.10})$ into the relations $(\ref{3.5})$ and $(\ref{3.6})$, we have
\begin{equation}
\begin{aligned}
p_j=\frac{\frac{q_j-q_{j-1}}{h}-\frac{2h}{(q_j+q_{j-1})^2}} {1+\frac{1}{2}\beta h}.
\end{aligned}
\nonumber
\end{equation}
and
\begin{equation}
\begin{aligned}
p_{j-1}
=\frac{\frac{q_j-q_{j-1}}{h}+\frac{2h}{(q_{j-1}+q_j)^2}+\gamma \circ\Delta W_{j-1}}{1-\frac{1}{2}\beta h}.
\end{aligned}
\nonumber
\end{equation}

Therefore, an implicit contact scheme of $(\ref{4.9})$ is
\begin{equation}\label{4.11}
\left \{
\begin{aligned}
q_j=&q_{j-1}+\frac{h}{2}\Big[(1+\frac{1}{2}\beta h)p_j+(1-\frac{1}{2}\beta h)p_{j-1}-\gamma \circ\Delta W_{j-1}\Big ],\\
p_j=&\Big[\frac{q_j-q_{j-1}}{h}-\frac{2h}{(q_j+q_{j-1})^2}\Big]/(1+\frac{1}{2}\beta h),\\
s_j=&s_{j-1}+\frac{1}{2h}(q_j-q_{j-1})^2-\frac{2h}{|q_j+q_{j-1}|}-\frac{\beta h}{2}(s_j+s_{j-1})\\
   &-\gamma q_{j-1}\circ\Delta W_{j-1}.
\end{aligned}
\right.
\end{equation}

 Here we prove the preservation of contact structure of the scheme $(\ref{4.11})$, whose conformal factor is $\lambda_j=\exp(-\beta \Delta t_j)$ in the continuous case, and the comparison with Euler-Maruyama scheme. The Euler-Maruyama scheme of $(\ref{4.9})$ is

\begin{equation}\label{4.12}
\left \{
\begin{aligned}
q_j&=q_{j-1}+hp_{j-1},\\
p_j&=p_{j-1}-h[\frac{1}{q_{j-1}^2}+\beta p_{j-1}]-\gamma \circ  \Delta W_{j-1},\\
s_j&=s_{j-1}+h[\frac{1}{2}p_{j-1}^2+\frac{1}{|q_{j-1}|}-\beta s_{j-1}]-\gamma q_{j-1} \circ \Delta W_{j-1}.
\end{aligned}
\right.
\end{equation}
  To examine the performances of contact scheme $(\ref{4.11})$, the initial conditions are
$(q_0,p_0,s_0)=(0.75,-0.25,0.08),\beta=0.01,\gamma=0.1,h=0.1,N=2000,$
 \begin{figure}[H]
   \centering
    \begin{minipage}{6.5cm}
       \includegraphics[width=2.8in, height=2.0in]{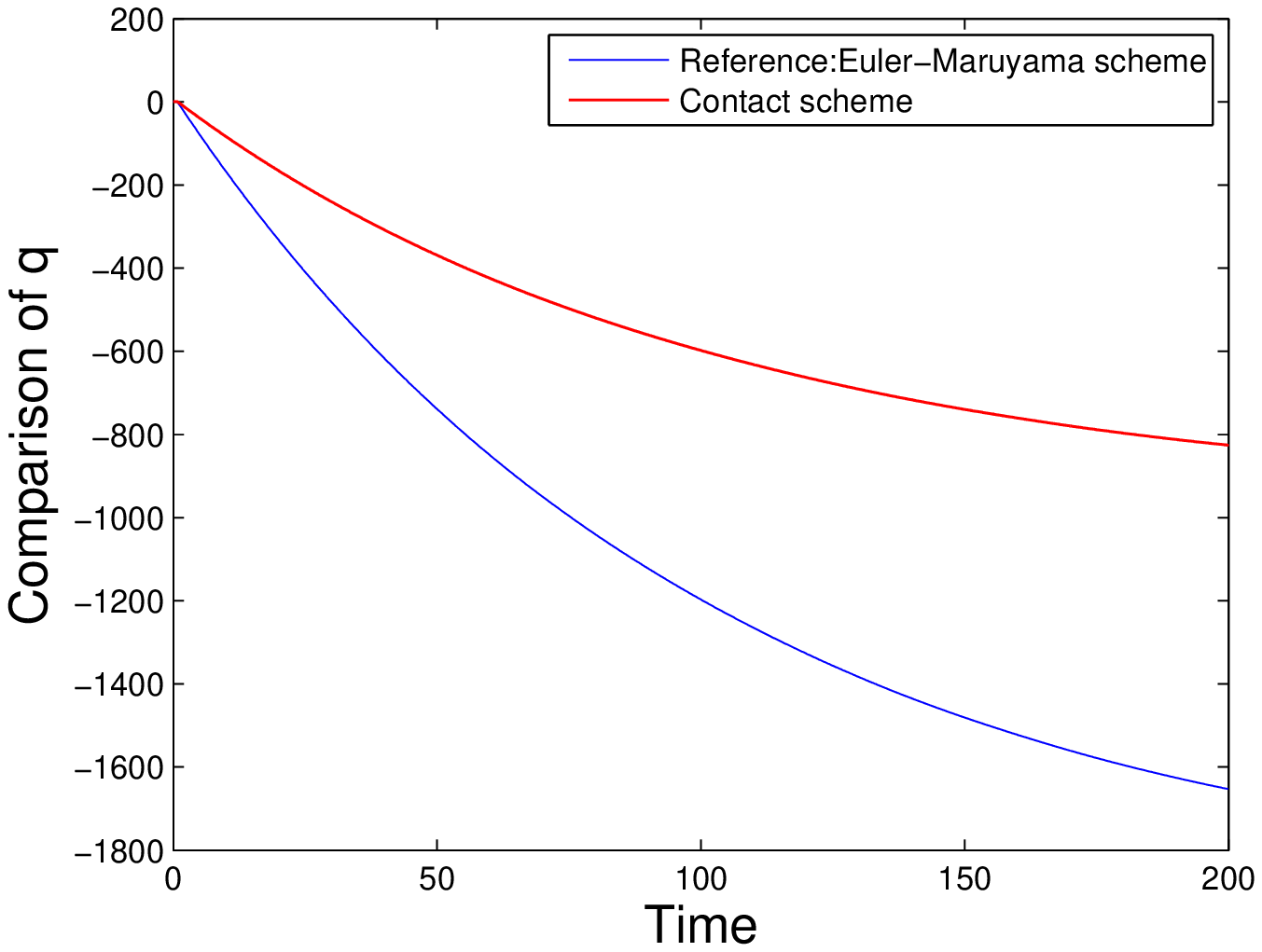}
    \end{minipage}
   \\
   \begin{minipage}{6.5cm}
       \includegraphics[width=2.8in, height=2.0in]{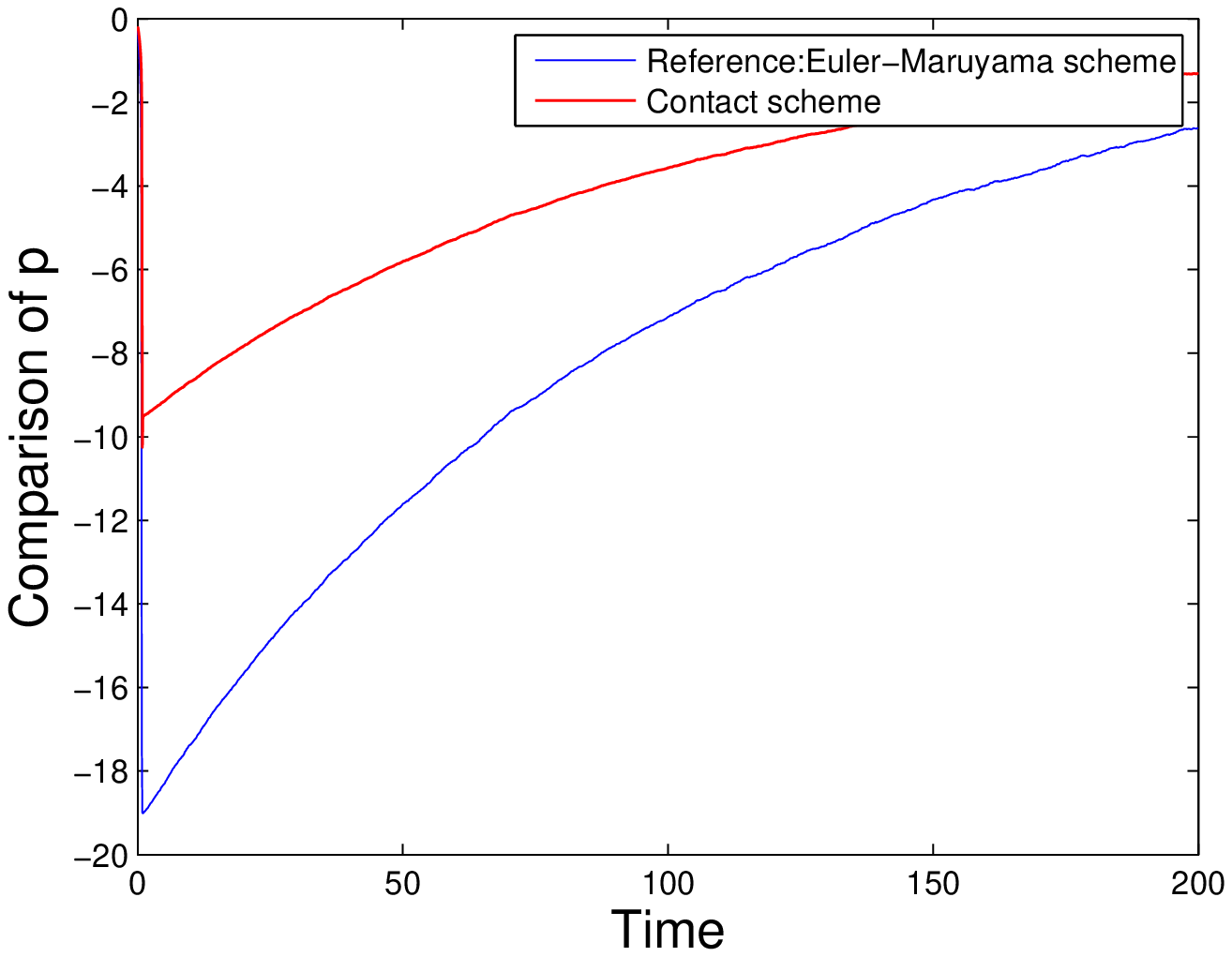}
    \end{minipage}
    \begin{minipage}{6.5cm}
       \includegraphics[width=2.8in, height=2.0in]{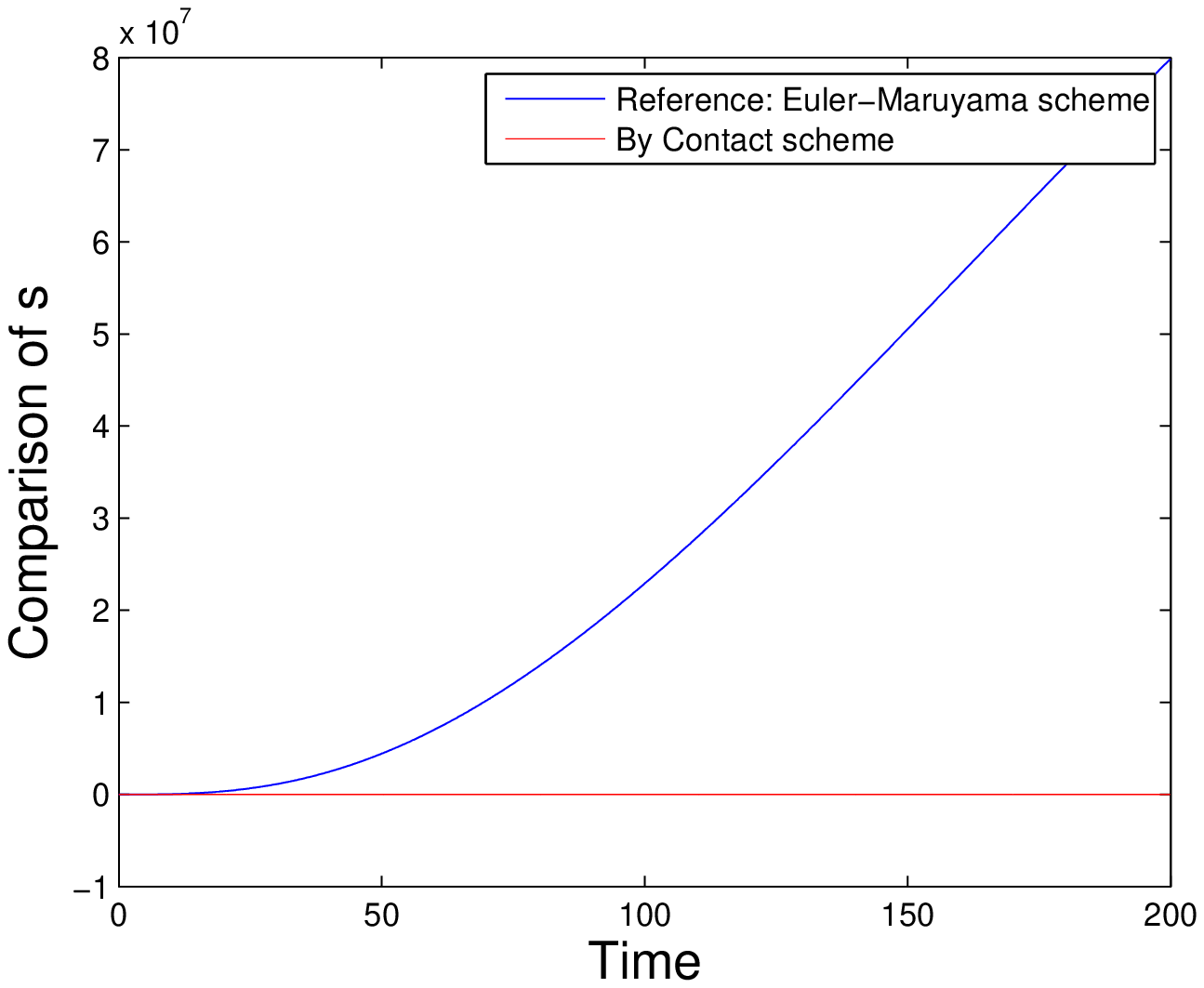}
    \end{minipage}
    \par{\scriptsize  Fig.7. Comparison of sample trajectories of $(\ref{4.11})$  and $(\ref{4.12})$ in the coordinates $q,p$ and $s$, respectively.}
\end{figure}
As we can see from Fig.7, the contact scheme has better performance than the non-contact scheme in the simulation of dissipation.

\begin{figure}[H]
\centering
\includegraphics[width=3.8in, height=1.8in]{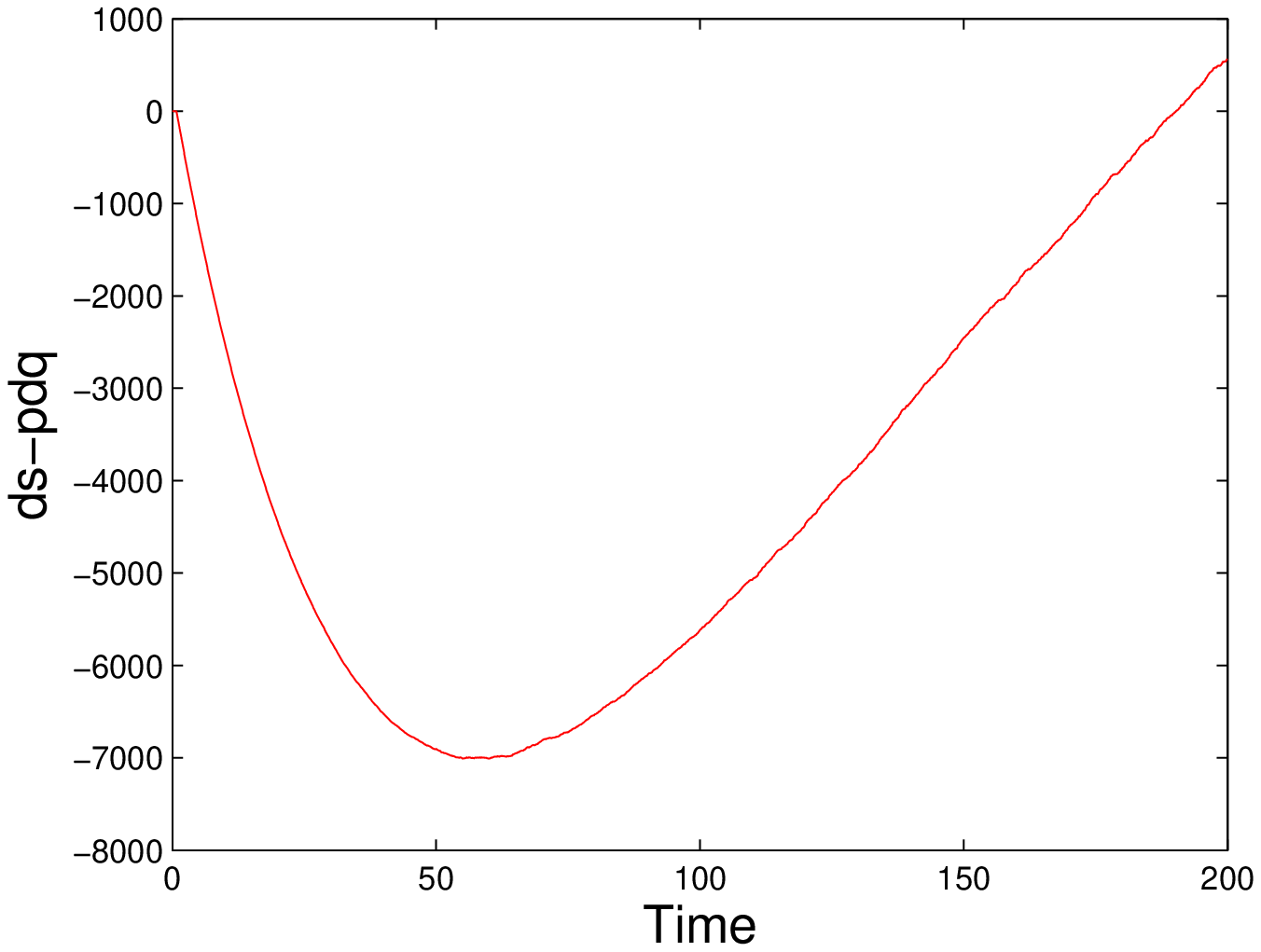}
\par{\scriptsize   Fig.8.  Preservation of contact structure $(\ref{4.9})$ .}
\end{figure}

 \begin{figure}[H]
\centering
\includegraphics[width=3.8in, height=2.2in]{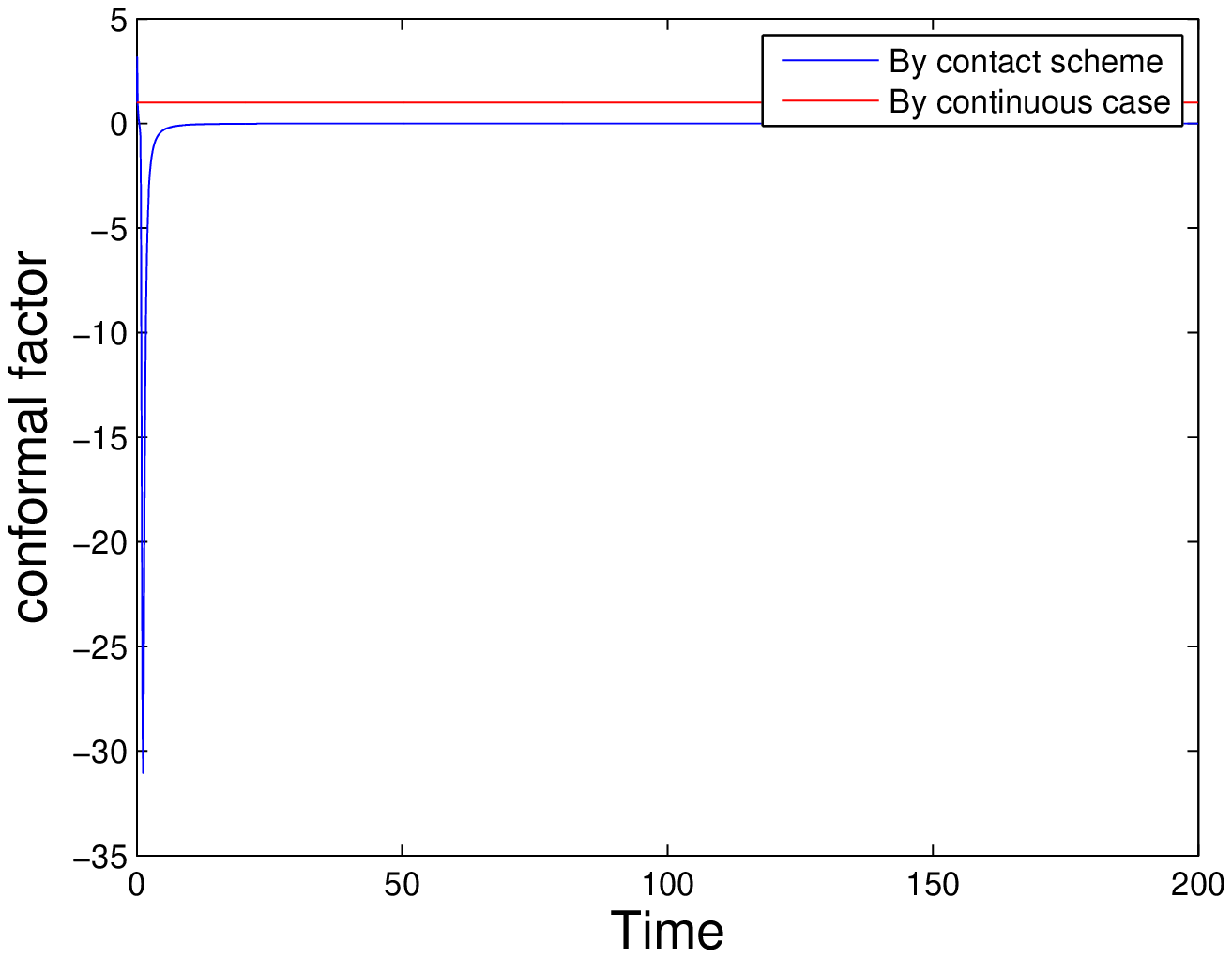}
\par{\scriptsize   Fig.9.  Comparison of the conformal factor of contact scheme $(\ref{4.11})$ and the continuous case of $(\ref{4.9})$.}
\end{figure}

It follows from Fig.8 and Fig.9 that the contact structure and conformal factor are preserved well almost surely, too. Therefore, these results verify the results of Theorem 3.8.

\begin{remark}

The construction of the Lagrangian of stochastic contact Hamiltonian systems is still an open problem. Here we only present some examples whose Lagrangian can be explicitly expressed via the deterministic part of the systems.
\end{remark}

\section{Conclusion}
 This paper focuses on the construction and proof of the stochastic contact variational integrator via stochastic Herglotz variational principle. We investigate the dynamics of stochastic contact Hamiltonian systems and the validation through performing three numerical experiments. These numerical experiments are conducted to demonstrate the effectiveness and superiority of the proposed method by the simulations of its orbits, contact structure and conformal factor over a long time interval. The results show that the method is effective as the numerical experiments match the results of theoretical analysis. Construction of various methods via the stochastic contact variational integrator theory would be our further work.

\section*{Statements}
All data in this manuscript is available. And all programs will be available on the WEB GitHub\cite{Zhan2}.

\section*{References}

\begin{enumerate}
\bibitem{Arnold}
V. I. Arnold, Mathematical Methods of Classical Mechanics, Springer Science and
Business Media, Vol. 60, 2013.

\bibitem{Bravetti}
A. Bravetti, H. Cruz, D. Tapias, Contact Hamiltonian mechanics, Ann.
Phys., 376(2017)17-39.

\bibitem{Bravetti02}
A. Bravetti, M. Seri, M. Vermeeren,F. Zadra, Numerical integration in Celestial Mechanics: a case for contact geometry, Celestial Mechanics and Dynamical Astronomy,132(7)(2020)1-29.

\bibitem{Cieslinski}
J.L. Cieslinski, T. Nikiciuk, A direct approach to the construction of standard and nonstandard
Lagrangians for dissipative-like dynamical systems with variable coefficients, J. Phys.
A: Math. Theor., 43(2010) 175205.

\bibitem{Duan}
J. Duan, An Introduction to Stochastic Dynamics, Cambridge University Press, 2015.

\bibitem{Feng}
K. Feng, Contact algorithms for contact  dynamical systems, J. Computational Mathematics, 16(1)(1998),1-14.

\bibitem{Geiges}
H. Geiges, An Introduction to Contact Topology, Cambridge University Press,
Vol. 109, 2008.

\bibitem{Georgieva}
B. Georgieva, R. Guenther T. Bodurov, Generalized variational principle of Herglotz for
several independent variables. First Noether-type theorem, J. Math. Physics, 44(2003) 3911-3927.

\bibitem{Golub}
G. Golub, C. Van Loan, Matrix Computations, 4th edition, The Johns Hopkins University Press, 2013.

\bibitem{Hairer}
E. Hairer, C. Lubich, G. Wanner, Geometric Numerical Integration, Springer-Verlag, 2002.

\bibitem{M. Kraus}
M. Kraus, T. M. Tyranowski, Variational integrators for stochastic dissipative Hamiltonian systems, IMA Journal of Numerical Analysis, (2020) 00, 1-50.

\bibitem{Leon}
M. de Leon, M. L. Valcazar, Contact Hamiltonian systems, J. Math. Phys.,
60(10)(2019) 102902 .

\bibitem{Liu}
Q. Liu, P. J. Torres, C. Wang, Contact Hamiltonian dynamics: Variational principles, invariants, completeness and periodic behavior,
 Ann. Phys., 395(2018) 26-44.

\bibitem{Margheri}
A. Margheri,R. Ortega, C. Rebelo, Dynamics of Kepler problem with linear drag, Celest. Mech. Dyn. Astron.,
120(2014) 19-38.

\bibitem{Milstein01}
G. Milstein, Numerical Integration of Stochastic Differential Equations, Kluwer Academic Publishers, 1995.

\bibitem{Milstein02}
G. Milstein, Y. Repin, M. Tretyakov, Numerical methods for stochastic systems preserving symplectic structure,
SIAM J. Numer. Anal., 40(4)(2002) 1583-1604.
\bibitem{Milstein03}
G. Milstein, Y. Repin, M. Tretyakov, Symplectic integration of Hamiltonian systems with additive noise,
SIAM J. Numer. Anal., 39(6)(2002) 2066-2088.

\bibitem{Misawa}
T. Misawa, Symplectic integrators to stochastic Hamiltonian dynamical systems derived from
composition methods, Mathematical problems in Engineering, 2010.

\bibitem{Musielak}
Z. Musielak, Standard and non-standard Lagrangians for dissipative dynamical systems with
variable coefficients, J. Phys. A: Math. Theor., 41(2008) 055205.

\bibitem{Tveter}
F. T. Tveter, Deriving the Hamilton equations of motion for a nonconservative system
using a variational principle, J. Math. Physics, 39(3)(1998)1495-1500.

\bibitem{Vermeeren}
M. Vermeeren, A. Bravetti, M. Seri, Contact variational integrators, J. Phys. A: Math. Theor.52(2019), 445206.

\bibitem{Hong}
L. Wang, J. Hong, R. Scherer, F. Bai, Dynamics and variational integrators of stochastic Hamiltonian systems,
International J. of numerical analysis and modeling, 6(4)(2009) 586-602.

\bibitem{X. Wang}
X. Wang, J. Duan, X. Li, Y. Luan, Numerical methods for the mean exit time and escape probability of two-dimensional stochastic dynamical systems with non-Gaussian noises, Appl. Math. Comput., 258(2015) 282-295.

\bibitem{Wei}
P. Wei, Z. Wang, Formulation of stochastic contact Hamiltonian systems, Chaos 31 (2021), 041101.

\bibitem{Zhan1}
Q. Zhan, J. Duan, X. Li, Symplectic Euler scheme for Hamiltonian
stochastic differential equations driven by L\'{e}vy noise,(2020) arXiv-
2006.15500.

\bibitem{Zhan2}
Q. Zhan, https://github.com/zhaniit2020/-Numerical-integration-of
-stochastic-contact-Hamiltonian-systems.git,GitHub,2022.

\end{enumerate}
\end{document}